\documentclass[12pt,reqno]{amsart}
\usepackage{amssymb,amsmath,amsthm,amsfonts,mathrsfs}
\usepackage{mathtools}
\usepackage{tikz}
\usepackage[normalem]{ulem}
\usepackage[margin=2.5cm]{geometry}
\usepackage[colorlinks=true,linkcolor=blue,citecolor=blue,urlcolor=blue]{hyperref}
\hypersetup{pdftitle={Global well-posedness for the 2D primitive equations with subcritical horizontal dissipation},pdfauthor={Quyuan Lin and Changhui Tan}}
\title[Fractional Primitive Equations]{Global well-posedness for the 2D primitive equations with subcritical horizontal dissipation}
\author[Quyuan Lin]{Quyuan Lin}
\address[Quyuan Lin]{School of Mathematical and Statistical Sciences, Clemson University, Clemson, SC 29634, USA}
\email{quyuanl@clemson.edu}
\author[Changhui Tan]{Changhui Tan}
\address[Changhui Tan]{Department of Mathematics, University of South Carolina, Columbia SC 29208, USA}
\email{tan@math.sc.edu}
\subjclass[2020]{35B65, 35Q35, 35Q86, 76D03}
\keywords{Primitive equations, Fractional dissipation, Global regularity, Commutator estimates}

\thanks{{\it Acknowledgment.}  
	 Q. Lin was partially supported by the Simons Foundation (SFI-MPS-TSM-00013384).
	 The work of C. Tan was partially supported by NSF grant DMS-2238219 and a USC-VPR ASPIRE grant.}

\newtheorem{theorem}{Theorem}[section]
\newtheorem{lemma}[theorem]{Lemma}
\newtheorem{proposition}[theorem]{Proposition}

\theoremstyle{definition}
\newtheorem{definition}{Definition}[section]
\theoremstyle{remark}

\def\T{\mathbb T}
\def\R{\mathbb R}
\def\Z{\mathbb Z}
\def\N{\mathbb N}
\def\dd{\mathrm d}
\def\dxdz{\dd x\dd z}
\def\pa{\partial}
\def\Lh{\Lambda_h}
\def\wh{\widetilde w}
\newcommand{\comm}[2]{\left[#1,#2\right]}
\def\K{\mathcal K}
\def\Ks{\K_{\textrm S}}
\def\Ka{\K_{\textrm A}}
\newcommand{\mean}[1]{\langle #1\rangle}

\newcommand{\HH}{\mathsf H}

\begin{document}
\allowdisplaybreaks

\begin{abstract}
We establish global well-posedness of classical solutions to the two-dimensional primitive equations with fractional horizontal dissipation for arbitrarily large initial data in the full subcritical range $1<\alpha\leq2$. Together with the known ill-posedness results for $0\leq\alpha<1$, this establishes the sharp dissipation threshold for large-data global well-posedness in the corresponding solution framework. 

The key ingredient is a hydrostatic energy estimate obtained by splitting the nonlinear energy into symmetric and antisymmetric parts and exploiting the commutator structure of the latter. Using anisotropy and incompressibility, we bound the nonlinear energy by the $L^\infty$ norm of the hydrostatic vorticity times a quadratic velocity norm with only one-half additional horizontal derivative. The vorticity maximum principle then yields enhanced velocity bounds, which close the vorticity estimates and verify the continuation criterion throughout the subcritical regime.
\end{abstract}
\maketitle

\section{Introduction}\label{sec:intro}
The primitive equations describe large-scale oceanic and atmospheric flows under the hydrostatic approximation, in which the vertical momentum equation is replaced by hydrostatic balance. This approximation can be justified as a small aspect ratio limit of the Navier--Stokes equations; see \cite{azerad2001mathematical,li2019primitive,li2022primitive} for rigorous derivations in the viscous setting. Their anisotropic and hydrostatic structure makes horizontal dissipation particularly important, and the contrast between inviscid instability and viscous regularity motivates the study of weaker forms of horizontal dissipation. In this paper, we consider the two-dimensional primitive equations with fractional horizontal dissipation:
\begin{equation}\label{eq:main}
\left\{\begin{aligned}
&\pa_tu+u\pa_xu+\wh\pa_zu+\pa_xp+\nu\Lh^\alpha u=0,\\
&\pa_xu+\pa_z\wh=0,\\
&\pa_zp=0,
\end{aligned}\right.
\end{equation}
on the periodic channel
\[
\Omega\triangleq\{(x,z):x\in\T,\ z\in[0,1]\},\qquad \T=\R/\Z.
\]
Here $(u,\wh)$ is the velocity, $p$ is the pressure, $\nu>0$, and $\Lh^\alpha=(-\pa_x^2)^{\alpha/2}$ with $0<\alpha\leq2$. There is no vertical dissipation. We impose the initial condition $u|_{t=0}=u_0$ and the impermeable boundary conditions
\begin{equation}\label{eq:bc}
\wh(x,0,t)=\wh(x,1,t)=0.
\end{equation}
Incompressibility determines the vertical velocity through
\begin{equation}\label{eq:w}
\wh(x,z,t)=-\int_0^z\pa_xu(x,\zeta,t)\,\dd\zeta.
\end{equation}
Thus recovering $\wh$ costs one horizontal derivative of $u$, which is a central difficulty in controlling the vertical transport. We work in the invariant space
\[
\HH\triangleq\left\{f\in L^2(\Omega):\int_0^1f(x,z)\,\dd z=0\text{ for a.e. }x\in\T\right\},
\]
and denote the hydrostatic vorticity by 
\[
\omega\triangleq\pa_zu.
\]

In the inviscid case $\nu=0$, the system reduces to the hydrostatic Euler equations. The loss of a horizontal derivative in \eqref{eq:w} creates a fundamental obstruction to Sobolev well-posedness. Renardy \cite{renardy2009ill} established linear ill-posedness around suitable shear flows, with unstable modes growing at rates proportional to their horizontal frequency. Han-Kwan and Nguyen \cite{han2016ill} subsequently proved nonlinear ill-posedness, showing that the solution map fails to be H\"older continuous in Sobolev spaces. Later, Ibrahim, Lin, and Titi \cite{ibrahim2021ill} extended these linear and nonlinear ill-posedness results to the system with Coriolis force. Thus, without additional structure on the initial condition, the inviscid problem does not admit the usual local Sobolev theory.

Positive local results are available under structural or analyticity assumptions. Brenier \cite{brenier1999homogeneous} developed a theory for hydrostatic flows with convex vertical velocity profiles. Masmoudi and Wong \cite{masmoudi2012h} established local existence and uniqueness in Sobolev spaces under a local Rayleigh condition, exploiting a cancellation in the vorticity equation to overcome the derivative loss. Kukavica, Masmoudi, Vicol, and Wong \cite{kukavica2014local} allowed multiple regions of monotonicity of the vorticity, with horizontal analyticity in the complementary region. In the analytic setting, Kukavica, Temam, Vicol, and Ziane \cite{kukavica2011local} and Ghoul, Ibrahim, Lin, and Titi \cite{Ghoul2022effect} proved local existence and uniqueness on bounded or periodic domains without a convexity assumption. These results illustrate two ways to recover local well-posedness: impose a favorable structure on the vertical profile, or strengthen the horizontal regularity beyond Sobolev spaces.

Even when local solutions exist, inviscid dynamics can develop singularities. Cao, Ibrahim, Nakanishi, and Titi \cite{cao2015finite} and Wong \cite{wong2015blowup} constructed classes of initial data for which smooth solutions cannot persist globally. Ibrahim, Lin, and Titi \cite{ibrahim2021ill} proved that smooth solutions can blow up in finite time even in the presence of Coriolis force. Collot, Ibrahim, and Lin \cite{collot2024stable} and Ibrahim, Lin, Qian, and Titi \cite{ibrahim2026profile} further identified stable singularity formation mechanisms. The inviscid theory therefore exhibits both instability of the local solution map and finite-time breakdown of smooth solutions.

The viscous theory has a markedly different development. Lions, Temam, and Wang \cite{lions1992new,lions1992equations} laid its mathematical foundations, including global existence of weak solutions for the viscous primitive equations. For the three-dimensional system with full viscosity, Cao and Titi \cite{cao2007global} established global well-posedness of strong solutions for arbitrarily large data. An alternative proof of global strong solvability was given by Kobelkov \cite{kobelkov2006existence}. Kukavica and Ziane \cite{kukavica2007regularity} treated Dirichlet boundary conditions and domains with varying bottom topography. Hieber and Kashiwabara \cite{hieber2016global} subsequently developed an $L^p$ approach based on the hydrostatic Stokes semigroup, extending the global strong solution theory to weaker initial regularity. These works show how the hydrostatic and anisotropic structure can be used to control the nonlinear transport in the presence of viscosity.

A central question is how much of that viscosity is actually needed. Cao, Li, and Titi \cite{cao2016global} proved global well-posedness of the three-dimensional primitive equations with only horizontal viscosity and horizontal thermal diffusion, starting from $H^2$ data. They later extended the theory to initial data near $H^1$, with additional boundedness and vertical regularity assumptions \cite{cao2017strong}, and established global well-posedness with horizontal viscosity and only vertical thermal diffusion \cite{cao2020global}. In particular, vertical viscosity is not necessary for global regularity. These results motivate the further reduction of horizontal viscosity to the fractional dissipation in \eqref{eq:main}.

The direction of the dissipation is essential: vertical viscosity alone does not eliminate the hydrostatic instability identified in \cite{renardy2009ill}. For this partially viscous system, G\'erard-Varet, Masmoudi, and Vicol \cite{gerard2020well} proved local well-posedness for convex initial profiles with Gevrey regularity, while Paicu, Zhang, and Zhang \cite{paicu2020hydrostatic} obtained global well-posedness for small data analytic in the horizontal variable and justified the corresponding hydrostatic limit. Lin, Liu, and Titi \cite{lin2022effect} further studied the effect of fast rotation in this system for initial data analytic in the horizontal variables. Thus the regularizing effect of horizontal dissipation cannot simply be replaced by vertical smoothing.

Fractional horizontal dissipation provides a setting in which to locate the transition between these behaviors. Abdo, Lin, and Tan \cite{abdo2025well} identified $\alpha=1$ as the sharp threshold for local Sobolev well-posedness. In the supercritical range $0<\alpha<1$, the system is ill-posed near suitable shear flows; in the subcritical range $1<\alpha<2$, sufficiently regular data generate unique local solutions. At the critical exponent $\alpha=1$, sufficiently small data yield global well-posedness, whereas ill-posedness occurs near certain large shear flows. The critical behavior therefore depends on the size of the data relative to the dissipation coefficient.

For global regularity, \cite{abdo2025well} established the BKM-type continuation criterion
\begin{equation}\label{eq:BKM}
\int_0^T\|\Lh^{\frac{3-\alpha}{2}}\omega(t)\|_{L^2}^2\,\dd t<\infty.
\end{equation}
When $\alpha\geq3/2$, the required derivative order satisfies $(3-\alpha)/2\leq\alpha/2$, so the basic vorticity energy estimate already controls \eqref{eq:BKM}. This gives global well-posedness in the range $3/2\leq\alpha\leq2$ and raises the question:
\begin{quote}
\emph{Is the system globally well-posed for arbitrarily large initial data throughout the remaining subcritical range $1<\alpha<3/2$?}
\end{quote}

Two improvements narrowed this gap. The work \cite{abdo2025well} first reached $\alpha\geq6/5$ by introducing an energy framework and replacing a vertical Poincar\'e estimate with a sharper interpolation inequality that retains information from both $u$ and $\omega$. This improves the estimate for $\pa_z\omega$ and, in turn, gives the vorticity regularity needed for \eqref{eq:BKM}. Tan and Ye \cite{tan2025global} subsequently extended global well-posedness to $\alpha\geq\alpha_0$, where $\alpha_0\approx1.1108$ is the root in $(1,6/5)$ of $2\alpha_0^3+3\alpha_0^2-4\alpha_0-2=0$. Their estimates make greater use of the maximum principle for $\omega$: interpolation places more of the nonlinear bound in the uniformly controlled $L^\infty$ norm of $\omega$, reducing its dependence on higher Sobolev norms. The resulting enhanced velocity and vorticity estimates lower the threshold for global regularity.

In this paper, we resolve the question throughout the full subcritical range. We establish global well-posedness for every $1<\alpha<3/2$, without a smallness assumption. Together with the known theory for $\alpha\geq3/2$, this gives global well-posedness for all $1<\alpha\leq2$. We state the result for classical solutions in the range $1<\alpha<3/2$, which is the focus of our argument.

Our main result is the following.

\begin{theorem}\label{thm:GWP}
Let $\nu>0$ and $1<\alpha<3/2$. Assume that
\begin{equation}\label{eq:init}
u_0\in\HH,\quad \Lh^\alpha u_0\in L^2(\Omega),\quad
\omega_0\triangleq\pa_zu_0\in L^\infty(\Omega),\quad
\Lh^{\frac\alpha2}\omega_0\in L^2(\Omega),\quad
\pa_z\omega_0\in L^2(\Omega).
\end{equation}
Then \eqref{eq:main}--\eqref{eq:bc} has a unique global classical solution with initial data $u_0$, in the sense of Definition~\ref{def:classical}. For every $T>0$, the solution depends continuously on the initial data in $C([0,T];\HH)$ on sets on which the norms in \eqref{eq:init} are uniformly bounded. %No smallness assumption is imposed on the initial data.
\end{theorem}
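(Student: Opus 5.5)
The plan is to reduce Theorem~\ref{thm:GWP} to the local theory and the BKM-type continuation criterion \eqref{eq:BKM} of \cite{abdo2025well}. Local existence, uniqueness and continuous dependence in $C([0,T];\HH)$ for data as in \eqref{eq:init} are taken from the subcritical local well-posedness result there. Global existence then follows once we show that, on any interval $[0,T]$ of existence, $\int_0^T\|\Lh^{\frac{3-\alpha}{2}}\omega\|_{L^2}^2\,\dd t$ is bounded by a constant depending only on $T$, $\nu$, $\alpha$ and the norms in \eqref{eq:init}.

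The a priori estimates will be done in three layers.

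\emph{First layer.} We use the basic bounds. The $L^2$ energy identity gives $u\in L^\infty_tL^2\cap L^2_t\dot H^{\alpha/2}_h$. The vorticity equation $\pa_t\omega+u\pa_x\omega+\wh\pa_z\omega+\nu\Lh^\alpha\omega=0$ has no pressure, since $\pa_zp=0$, and it is transported by a divergence-free field tangent to the boundary. Hence the maximum principle yields $\|\omega(t)\|_{L^\infty}\le\|\omega_0\|_{L^\infty}$.

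\emph{Second layer (the main new step).} We derive a hydrostatic energy estimate at the level $\Lh^{s}u$ for a suitable $s$ slightly above $\alpha/2$. Consider the nonlinear term $N=\int \Lh^s(u\pa_xu+\wh\pa_zu)\,\Lh^s u$. We write the transport operator's contribution as $\int [\Lh^s,\,u\pa_x+\wh\pa_z]u\;\Lh^s u$, using that the full transport term is skew by incompressibility. We then split the commutator kernel into its symmetric and antisymmetric parts in the horizontal variables.
\begin{itemize}
\item The symmetric part is handled by symmetrization. It produces terms with $\pa_xu$, which are integrated by parts vertically using $\pa_xu=-\pa_z\wh$. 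This converts them into $\omega$ times quantities of the form $\wh\,\cdot$, which are bounded by $\|\omega\|_{L^\infty}$.
\item The antisymmetric part is itself a commutator. It gains a derivative, which we place so that the vertical velocity $\wh$ (which costs $\pa_x u$) appears only against $\omega$.
\end{itemize}
The target bound is
\[
|N|\lesssim \|\omega\|_{L^\infty}\,\|\Lh^{s+\frac12}u\|_{L^2}^2 ,
\]
that is, one extra half horizontal derivative. Since $\alpha>1$, we may choose $s$ with $s+\frac12<\frac\alpha2+s$, so that $\|\Lh^{s+1/2}u\|^2$ is interpolated between $\|\Lh^s u\|^2$ and the dissipation $\nu\|\Lh^{s+\alpha/2}u\|^2$. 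Young's inequality together with Gronwall then gives enhanced velocity bounds $u\in L^\infty_t\dot H^{s}_h\cap L^2_t\dot H^{s+\alpha/2}_h$ for every $s$ up to roughly the range allowed by the data. This step is where $\alpha>1$ enters sharply. I expect it to be the main obstacle: making the commutator decomposition rigorous, controlling the boundary terms at $z=0,1$, and avoiding a vertical derivative falling on $\omega$ all require care, presumably through kernel representations of $\Lh^s$ on $\T$.

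\emph{Third layer.} We return to the vorticity. Test the $\omega$-equation with $\Lh^{\alpha}\omega$ (and the $\pa_z\omega$ equation with $\pa_z\omega$), as in \cite{abdo2025well,tan2025global}. The worst terms involve $\wh\,\pa_z\omega$ and $\pa_xu\,\omega$, paired against $\Lh^{\alpha}\omega$ or $\pa_z\omega$. Using the enhanced horizontal regularity of $u$ from the second layer, $\|\omega\|_{L^\infty}$, and the interpolation of \cite{abdo2025well} between $u$ and $\omega$ in $z$, we close a Gronwall inequality for $\|\Lh^{\alpha/2}\omega\|_{L^2}^2+\|\pa_z\omega\|_{L^2}^2$. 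This gives $\omega\in L^2_t\dot H^{\alpha}_h$, hence \eqref{eq:BKM}, since $(3-\alpha)/2<\alpha$ for $\alpha>1$. The continuation criterion then extends the solution globally. Continuous dependence on bounded sets follows from an $L^2$ difference estimate using these uniform bounds.
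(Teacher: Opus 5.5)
Your architecture matches the paper's: local theory with the continuation criterion \eqref{eq:BKM}, the maximum principle, a bound $|\K|\lesssim\|\omega\|_{L^\infty}\|\Lh^{s+\frac12}u\|_{L^2}^2$ absorbed into the dissipation because $\alpha>1$, and then vorticity estimates. However, the second layer, which is the only genuinely new step, is stated as a target rather than proved, and the mechanism you sketch does not yield it. The claim that the antisymmetric part ``is itself a commutator'' and therefore ``gains a derivative'' fails in one frequency configuration. In the paper's form $-\frac12\int_\Omega\omega\,\comm{\Lh^{2s}}{\wh}u\,\dxdz$, the commutator gains only when $\wh$ is at lower frequency than $u$. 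When $\wh_j$ carries frequency $2^j$ and $u$ is at low frequency, three things happen: the symbol $|\xi+\eta|^{2s}-|\eta|^{2s}$ has size $|\xi|^{2s}$; the outer $\omega$, which must go in $L^\infty$, is also at frequency $\simeq2^j$; and all $2^{(2s+1)j}$ derivatives fall on the single factor $u_j$. Your own form $\int_\Omega\comm{\Lh^s}{\wh}\omega\,\Lh^su\,\dxdz$ has the same defect, in the interaction where $\wh$ and $\omega$ are both at frequency $2^j$ and $\Lh^su$ is at $2^k\ll2^j$. There the commutator symbol is $\simeq-2^{sj}$ with no cancellation. With $\omega$ in $L^\infty$, as the target bound requires, one gets $2^{(j-k)/2}U_jU_k$, where $U_j=2^{(s+\frac12)j}\|\Delta_ju\|_{L^2}$, and this is not summable. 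A horizontal symmetrization of the commutator kernel is an identity and cannot change the size of this frequency-localized piece.

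The missing idea is a vertical integration by parts in exactly this regime, taking $\pa_z$ off the high-frequency vorticity. For $I_j=-\int_\Omega\omega\,\comm{\Lh^{2s}}{\wh_j}S_{j-5}u\,\dxdz$, move $\pa_z$ from $\omega=\pa_zu$ onto the commutator; boundary terms vanish since $\wh_j=0$ at $z=0,1$. The derivative lands either on $\wh_j$, giving $\pa_z\wh_j=-\pa_xu_j$, or on the low-frequency argument, giving $S_{j-5}\omega$. By Fourier support the outer $u$ may be replaced by $\widetilde\Delta_ju$. The derivatives then split as $2^{(s+\frac12)j}\cdot2^{(s+\frac12)j}$ between $u_j$ and $\widetilde\Delta_ju$, with $S_{j-5}u$ or $S_{j-5}\omega$ in $L^\infty$, controlled via \eqref{eq:uomegainf}.

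The paper's symmetric/antisymmetric split is with respect to exchanging $u$ and $\omega$ in $\omega\Lh^{2s}u$, and its integration by parts turns $\wh$ into $\pa_xu$. That leaves a purely horizontal term handled by the fractional Leibniz rule. Your step goes the other way, turning $\pa_xu$ into $\wh$ paired with $\omega$, which reintroduces the very structure at issue.

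Your third layer is also only asserted. It can be completed by running the Tan--Ye vorticity arguments with the velocity bound at index $1$, which the paper notes suffices. The paper's self-contained route is different:
\begin{enumerate}
\item It proves $\int_0^T\|\pa_xu\|_{L^\infty}\,\dd t<\infty$ from $\|\pa_xu\|_{L^\infty}\lesssim\|\omega\|_{L^\infty}^{1/3}\|\Lh^mu\|_{L^2}^{2/3}$ with $m>2$, using a time weight when $\alpha\le4/3$.
\item It closes Gronwall for $\|\pa_z\omega\|_{L^2}$.
\item It obtains \eqref{eq:BKM} by blockwise vertical interpolation between $u$ and $\pa_z\omega$, never estimating $\Lh^{\alpha/2}\omega$ from its equation.
\end{enumerate}
Either way, you must explain how $\int_\Omega(\pa_xu)(\pa_z\omega)^2\,\dxdz$ is controlled.
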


Theorem~\ref{thm:GWP} closes the remaining gap between local and global well-posedness and completes the subcritical theory: every dissipation exponent $1<\alpha\leq2$ that guarantees local Sobolev well-posedness also guarantees global well-posedness for arbitrarily large data under the corresponding regularity assumptions. Together with the ill-posedness results of \cite{abdo2025well}, this establishes a sharp dichotomy between global well-posedness in the subcritical range $1<\alpha\leq2$ and ill-posedness in Sobolev spaces near suitable shear flows in the supercritical range $0<\alpha<1$. Thus there is no additional dissipation threshold separating local solvability from global regularity. At the critical exponent $\alpha=1$, small-data global well-posedness coexists with ill-posedness near certain large shear flows, confirming the sharpness of the subcritical condition for unrestricted data size. Figure~\ref{fig:regimes} summarizes how the successive improvements lead to this complete picture.

\begin{figure}[htbp]
\centering
\begin{tikzpicture}[x=1cm,y=1cm,font=\small]
% The horizontal scale is schematic to separate the nearby thresholds.
\definecolor{regimeIP}{RGB}{155,57,48}
\definecolor{regimeWP}{RGB}{38,78,119}
\definecolor{regimeNew}{RGB}{0,103,85}
\draw[->] (0,0.8)--(10.4,0.8) node[right] {$\alpha$};
\foreach \x/\lab in {0/0,3.2/1,4.6/\alpha_0,6/{6/5},7.9/{3/2},10/2}{
  \draw (\x,0.73)--(\x,0.87);
  \node[above] at (\x,0.88) {$\lab$};
}
\foreach \x in {4.6,6,7.9,10}
  \draw[gray!25,densely dotted] (\x,0.65)--(\x,-4.05);
\draw[gray!65,dashed] (3.2,0.65)--(3.2,-4.05);
\node[anchor=east] at (-0.25,0) {Local theory \cite{abdo2025well}};
\node[anchor=east] at (-0.25,-1) {Energy criterion};
\node[anchor=east] at (-0.25,-2) {Abdo-Lin-Tan \cite{abdo2025well}};
\node[anchor=east] at (-0.25,-3) {Tan-Ye \cite{tan2025global}};
\node[anchor=east,font=\small\bfseries] at (-0.25,-4) {This work};
\draw[regimeIP,line width=1.2pt] (0,0)--(3.2,0);
\draw[regimeWP,line width=1.2pt] (3.2,0)--(10,0);
\node[regimeIP,above] at (1.6,0) {Ill-posedness};
\node[regimeWP,above] at (6.6,0) {Local well-posedness};
\draw[gray,dashed] (3.2,-1)--(7.9,-1);
\node[gray,above] at (5.55,-1) {Gap in the global theory};
\draw[regimeWP,line width=1.2pt] (7.9,-1)--(10,-1);
\node[regimeWP,above] at (8.95,-1) {GWP};
\draw[regimeWP,line width=1.2pt] (6,-2)--(10,-2);
\draw[regimeWP,->] (7.9,-1.62)--(6,-1.62);
\node[regimeWP,above] at (8.8,-2) {GWP};
\draw[regimeWP,line width=1.2pt] (4.6,-3)--(10,-3);
\draw[regimeWP,->] (6,-2.62)--(4.6,-2.62);
\node[regimeWP,above] at (8,-3) {GWP};
\draw[regimeIP,line width=1.2pt] (0,-4)--(3.2,-4);
\node[regimeIP,above] at (1.6,-4) {Ill-posedness};
\draw[regimeNew,line width=1.6pt] (3.2,-4)--(10,-4);
\draw[regimeNew,->] (4.6,-3.62)--(3.2,-3.62);
\node[regimeNew,above,font=\small\bfseries] at (7.2,-4) {GWP for every $\alpha>1$};
\foreach \x/\y in {0/0,3.2/0,0/-4,3.2/-4}
  \draw[fill=white,line width=0.8pt] (\x,\y) circle (0.055);
\foreach \x/\y in {10/0,7.9/-1,10/-1,6/-2,10/-2,4.6/-3,10/-3}
  \fill[regimeWP] (\x,\y) circle (0.05);
\fill[regimeNew] (10,-4) circle (0.05);
\end{tikzpicture}
\caption{The dissipation thresholds, shown schematically (not to scale). GWP denotes global well-posedness for arbitrarily large data under the corresponding regularity assumptions. The dashed line at $\alpha=1$ marks the critical case, where small-data global well-posedness coexists with ill-posedness near certain large shear flows.}
\label{fig:regimes}
\end{figure}

The sharp subcritical global well-posedness theory can be extended by analogous arguments to the smooth and strong solution frameworks developed in \cite{abdo2025well}, under suitable initial regularity assumptions. We do not pursue these extensions here, focusing instead on classical solutions and the sharp dissipation threshold. The initial regularity assumptions in \eqref{eq:init} are stronger than necessary. Determining the weakest assumptions that guarantee global well-posedness is left for future work.

To explain the main estimate, consider the horizontal energy of a smooth solution. Applying $\Lh^s$ to \eqref{eq:main}$_1$, multiplying by $\Lh^su$, and integrating over $\Omega$, we obtain
\[
\frac12\frac{\dd}{\dd t}\|\Lh^su\|_{L^2}^2
+\nu\|\Lh^{s+\frac\alpha2}u\|_{L^2}^2
=-\int_\Omega\Lh^s(u\pa_xu+\wh\omega)\,\Lh^su\,\dxdz.
\]
The pressure term vanishes because $\pa_zp=0$ and $u\in\HH$. Our main new estimate controls the nonlinear integral on the right with a loss of only one-half of a horizontal derivative and a coefficient linear in $\|\omega\|_{L^\infty}$.

\begin{theorem}\label{thm:hydrostatic}
Let $0\leq s<3/2$ and let $u\in C^\infty(\overline\Omega;\R)\cap\HH$. Set $\omega=\pa_zu$ and define $\wh$ by \eqref{eq:w}. Then
\begin{equation}\label{eq:hydrostatic-vort}
\left|\int_\Omega\Lh^s(u\pa_xu+\wh\omega)\,\Lh^su\,\dxdz\right|
\leq C_s\|\omega\|_{L^\infty}\|\Lh^{s+\frac12}u\|_{L^2}^2.
\end{equation}
The constant $C_s$ is independent of $u$.
\end{theorem}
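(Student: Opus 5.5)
The plan is to rewrite the nonlinear integral as a commutator. Then I would decompose it in horizontal Fourier modes and split the resulting trilinear kernel into symmetric and antisymmetric parts. Only the antisymmetric part survives, and it gains one horizontal derivative, which is what the $\tfrac12$-derivative loss in \eqref{eq:hydrostatic-vort} requires.

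\emph{Step 1: reduction to a commutator.} Since $\pa_xu+\pa_z\wh=0$ and $\wh$ vanishes at $z=0,1$ by \eqref{eq:bc} and \eqref{eq:w}, the transport operator $u\pa_x+\wh\pa_z$ is skew-adjoint. Hence $\int_\Omega(u\pa_x+\wh\pa_z)\Lh^su\,\Lh^su\,\dxdz=0$, and the left-hand side of \eqref{eq:hydrostatic-vort} equals
\[
\int_\Omega\comm{\Lh^s}{u}\pa_xu\,\Lh^su\,\dxdz+\int_\Omega\comm{\Lh^s}{\wh}\omega\,\Lh^su\,\dxdz .
\]

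\emph{Step 2: Fourier representation and symmetrization.} I would expand $u(x,z)=\sum_k\hat u_k(z)e^{2\pi ikx}$. The first term becomes $\sum_{l+m=k}(|k|^s-|m|^s)(im)\int_0^1\hat u_l\hat u_m\overline{\hat u_k}\,\dd z$. The second becomes the same sum with $\hat u_l\hat u_m$ replaced by $\hat{\wh}_l\hat\omega_m$, where $\hat{\wh}_l=-il\int_0^z\hat u_l$. The weights should be written in terms of $|k|^s$ and $|m|^s$ and symmetrized under the exchange $m\leftrightarrow-k$, which swaps the two factors of $u$ that carry high frequency when $|l|\ll|k|$.
\begin{itemize}
\item The symmetric part of the weight is real and even. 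After relabeling it reduces to terms in which the derivative falls on the low-frequency factor, namely $\pa_xu$ or $\pa_x\wh$.
\item The antisymmetric part has kernel of size $|l|\,|k|^{s-1}|m|$. By the mean value theorem, $\big||k|^s-|m|^s\big|\lesssim|l|(|k|^{s-1}+|m|^{s-1})$ when $|l|\lesssim|k|$. In the high--high--low configuration this places exactly $s+\tfrac12$ derivatives on each high factor and one derivative on the low factor.
\end{itemize}

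\emph{Step 3: trading the low-frequency derivative for $\omega$.} This is the step I expect to be the main obstacle. A naive bound gives $\|\pa_xu\|_{L^\infty}$, and this is not controlled by $\|\omega\|_{L^\infty}$. I would proceed as follows.
\begin{itemize}
\item First use $u\in\HH$, so that $u=\int_0^z\omega-\int_0^1\!\int_0^{z'}\omega$. This expresses every $u$ and every $\wh=-\pa_x\int_0^z u$ through vertical integrals of $\omega$.
\item Then integrate by parts in $z$ in the $\wh\omega$ term. This moves $\pa_z$ off $\omega$, and the divergence-free identity cancels the leading part of the $u\pa_xu$ term exactly; this is the hydrostatic cancellation.
\item After this, each remaining trilinear term should have the form $\int_\Omega \omega\cdot A(\Lh^{s+\frac12}u)\cdot B(\Lh^{s+\frac12}u)$, or a vertically integrated version of it. Here $A,B$ are bounded horizontal Fourier multipliers coming from the antisymmetric kernel (Coifman--Meyer type, uniformly in $z$), and possibly Volterra operators $\int_0^z$ in $z$, which are bounded on $L^2_z$.
\item These terms are then bounded by $\|\omega\|_{L^\infty}\|\Lh^{s+\frac12}u\|_{L^2}^2$.
\end{itemize}
For the configurations where the low frequency sits on $\omega$ or on $\Lh^su$, I would handle the remaining ranges with a Littlewood--Paley paraproduct in $x$.

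\emph{Step 4: the restriction $s<3/2$.} This condition should enter in the high--high--to--low interaction, where the output frequency $|k|$ is much smaller than $|l|\sim|m|$. There the weight $|k|^{2s}$ must be absorbed into $|l|^{s+\frac12}|m|^{s+\frac12}$ together with the extra $|l|$ coming from $\wh$. The resulting sum over dyadic scales converges exactly when $2s<3$, which gives the dependence of $C_s$ on $s$.
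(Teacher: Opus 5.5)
Your Step 1 is correct, and the mean-value gain you describe in Step 2 is the right mechanism for the blocks in which $\wh$ carries the low frequency. The genuine gap is Step 3, which you flag as the crux but only assert; the mechanism you propose there does not work.

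Write $\wh_j=\Delta_j\wh$ and $U_j=2^{j(s+\frac12)}\|\Delta_ju\|_{L^2}$. In your term $\int_\Omega\comm{\Lh^s}{\wh}\omega\,\Lh^su\,\dxdz$, put $\wh$ and $\omega$ at frequency $\sim2^j$ and the outer factor at $2^k$ with $k\ll j$. The commutator symbol $|k|^s-|m|^s$ is then $\approx-|m|^s$, so there is no gain. The piece $\int_\Omega\wh_j\,\Lh^s\Delta_j\omega\,\Lh^s\Delta_ku\,\dxdz$ is only bounded by $C\|\omega\|_{L^\infty}U_jU_k2^{(j-k)/2}$. A high-frequency $\omega$ is controlled only in $L^\infty$, and putting the low-frequency $u$ in $L^\infty$ instead would require $\|\Delta_j\omega\|_{L^2}$, which the right-hand side does not contain.

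A global integration by parts in $z$ does not cure this. It gives $-\int_\Omega\comm{\Lh^s}{\wh}u\,\Lh^s\omega\,\dxdz+\int_\Omega\comm{\Lh^s}{\pa_xu}u\,\Lh^su\,\dxdz$. In the first integral, the block with $\wh$ at $2^j$, $u$ at $2^k\ll2^j$ and $\Lh^s\omega$ at $2^j$ has size $2^{(s+\frac12)(j-k)}U_jU_k\|\omega\|_{L^\infty}$, so the high-frequency $\omega$ has only moved to another slot. There is also no exact cancellation with the $u\pa_xu$ term, and that term needs none: the fractional Leibniz rule and $\|u\|_{L^\infty}\leq\|\omega\|_{L^\infty}$ bound it directly. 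Your $m\leftrightarrow-k$ symmetrization cannot help either, because in the $\wh\omega$ term the two high-frequency slots hold $\omega$ and $u$, not two copies of $u$. Asserting that every remaining term has the form $\int\omega\,A(\Lh^{s+\frac12}u)\,B(\Lh^{s+\frac12}u)$ therefore assumes the conclusion.

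The missing idea is to integrate by parts in $z$ only inside the offending frequency block. There you write $\omega=\pa_zu$ and move $\pa_z$ onto $\wh_j$, using $\pa_z\wh_j=-\pa_x\Delta_ju$ and $\wh_j|_{z=0,1}=0$, and onto the low-frequency factor. That factor then goes into $L^\infty$, while two high-frequency velocities share the $2s+1$ derivatives.

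The paper organizes this with the exact identity $\omega\Lh^{2s}u+u\Lh^{2s}\omega=\pa_z(u\Lh^{2s}u)$, which gives $\K=\frac32\int_\Omega\Lh^s(u\pa_xu)\,\Lh^su\,\dxdz-\frac12\int_\Omega\omega\,\comm{\Lh^{2s}}{\wh}u\,\dxdz$. With $\omega$ as the outer factor, the Bony pieces where $\wh$ has low frequency (commutator gain) or frequency comparable to $u$ follow from H\"older. Only the piece with $\wh$ high and $u$ low needs the localized integration by parts, together with replacing the outer $u$ by $\widetilde\Delta_ju$.

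Your Step 4 also misplaces the restriction $s<3/2$. In the high--high--to--low block, absorbing $|k|^{2s}|l|$ into $|l|^{s+\frac12}|m|^{s+\frac12}$ leaves $(|k|/|l|)^{2s}$, which is summable over dyadic scales for every $s>0$. The threshold actually comes from the low--high block with $\wh$ at frequency $2^\ell\ll2^j$. There the commutator gain adds a second horizontal derivative on the low-frequency velocity, and the resulting kernel $2^{-(\frac32-s)(j-\ell)}$ is summable only when $s<3/2$.
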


The proof begins by splitting the vertical transport contribution into symmetric and antisymmetric parts with respect to $u$ and $\omega=\pa_zu$. The symmetric part is a vertical derivative; integration by parts and the incompressibility relation $\pa_z\wh=-\pa_xu$ reduce it to a horizontal transport term. The antisymmetric part has the commutator representation
\[
\Ka=-\frac12\int_\Omega\omega\,[\Lh^{2s},\wh]u\,\dxdz.
\]
The commutator gains a horizontal derivative on the lower-frequency factor when $\wh$ has the lower frequency. For the opposite ordering, a further integration by parts in $z$ gives a representation in which the derivatives can again be distributed between two comparable velocity factors. The anisotropic structure and incompressibility are essential in both steps. Together, these cancellations yield \eqref{eq:hydrostatic-vort} for every $0\leq s<3/2$.

The factor $\|\omega\|_{L^\infty}$ appears to the first power in \eqref{eq:hydrostatic-vort}, and the maximum principle bounds it by $\|\omega_0\|_{L^\infty}$ for all time. Thus the nonlinear energy is controlled by a fixed coefficient multiplying a quadratic horizontal Sobolev norm of $u$, without any higher vorticity norm in that coefficient. Since $\alpha/2>1/2$, interpolation absorbs the derivative loss into the dissipation and gives
\[
\frac{\dd}{\dd t}\|\Lh^su\|_{L^2}^2
+\nu\|\Lh^{s+\frac\alpha2}u\|_{L^2}^2
\leq C_{s,\alpha,\nu}\|\omega_0\|_{L^\infty}^{\frac\alpha{\alpha-1}}
\|\Lh^su\|_{L^2}^2.
\]
Consequently, the velocity energy and its dissipation remain bounded at every horizontal index $0\leq s<3/2$ for which the initial energy is finite. This substantially strengthens the velocity estimate in \cite{tan2025global} and removes the restriction on $\alpha$ arising from the iteration of velocity and vorticity bounds. These enhanced estimates allow us to control $\pa_z\omega$ and then verify \eqref{eq:BKM} throughout $1<\alpha<3/2$, completing the global well-posedness argument. The improved velocity control allows us to verify the continuation criterion by vertical interpolation, substantially simplifying the proof for classical solutions.

The rest of the paper is organized as follows. Section~\ref{sec:prelim} collects the basic a priori bounds, the classical solution and continuation theory, and the functional inequalities used in the proof. Section~\ref{sec:commutator} establishes the hydrostatic energy estimate by the symmetric--antisymmetric decomposition. Section~\ref{sec:apriori} derives the enhanced velocity and vorticity bounds, verifies the continuation criterion, and completes the proof of Theorem~\ref{thm:GWP}.

\subsection*{Notations}
We write $L^p=L^p(\Omega)$ for integration over the full domain, while $L_x^p=L^p(\T)$ and $L_z^p=L^p(0,1)$ refer to integration in the indicated variable only, with the other variable held fixed.

For $f=f(x,z)$, we denote its horizontal Fourier coefficients and horizontal mean by
\[
\widehat f_n(z)=\int_{\T}f(x,z)e^{-2\pi inx}\,\dd x,\qquad n\in\Z,
\qquad
\mean{f}(z)=\int_{\T}f(x,z)\,\dd x=\widehat f_0(z).
\]
For $s>0$, the horizontal fractional derivative is defined by
\[
\Lh^sf=\sum_{n\in\Z}(2\pi|n|)^s\widehat f_n(z)e^{2\pi inx}.
\]
We use the convention $\Lh^0=\mathrm{Id}$. Negative homogeneous powers appearing below are applied only to functions with zero horizontal mean, with the sum restricted to $n\neq0$.

The symbol $C$ denotes a positive constant which may vary from line to line. We write $a\lesssim b$ if $a\leq Cb$, and $a\simeq b$ if both $a\lesssim b$ and $b\lesssim a$ hold. Subscripts indicate dependence on the corresponding parameters.

\section{Preliminaries}\label{sec:prelim}
This section collects the preliminary results used below. We refer to \cite{abdo2025well} for further details on the basic a priori estimates and the local and continuation theory.

\subsection{A priori bounds}\label{subsec:basic-bounds}
For a smooth solution, taking the $L^2$ inner product of \eqref{eq:main}$_1$ with $u$ gives
\[
\|u(t)\|_{L^2}^2
+2\nu\int_0^t\|\Lh^{\frac\alpha2}u(\tau)\|_{L^2}^2\,\dd\tau
=\|u_0\|_{L^2}^2.
\]
The transport terms cancel by incompressibility and \eqref{eq:bc}, and the pressure term vanishes because $\pa_zp=0$ and $u\in\HH$.

Differentiating \eqref{eq:main}$_1$ in $z$ and using $\pa_z\wh=-\pa_xu$, we obtain the vorticity equation
\begin{equation}\label{eq:vorticity}
\pa_t\omega+(u\pa_x+\wh\pa_z)\omega+\nu\Lh^\alpha\omega=0.
\end{equation}
Consequently, $\omega$ satisfies the $L^2$ energy identity
\begin{equation}
 \|\omega(t)\|_{L^2}^2 +2\nu\int_0^t\|\Lh^{\frac\alpha2}\omega(\tau)\|_{L^2}^2\,\dd\tau
 =\|\omega_0\|_{L^2}^2,\label{eq:omegaL2}
\end{equation}
and the maximum principle
\begin{equation}
\|\omega(t)\|_{L^\infty} \leq \|\omega_0\|_{L^\infty}.
\label{eq:max}	
\end{equation}
These estimates hold for $0<\alpha\leq2$. For solutions obtained by approximation, the energy identities are used in their inequality form.

The zero vertical mean of $u$ also gives
\[
|u(x,z)| = \left|\int_0^1\bigl(u(x,z)-u(x,\zeta)\bigr)\,\dd\zeta\right|
=\left|\int_0^1\int_\zeta^z\omega(x,r)\,\dd r\,\dd\zeta\right|\leq\|\omega\|_{L^\infty}.
\]
Hence, we have
\begin{equation}\label{eq:uomegainf}
\|u\|_{L^\infty}\leq\|\omega\|_{L^\infty}.
\end{equation}

Finally, \eqref{eq:w} implies $\mean{\wh}=0$.

\subsection{Classical solutions and the regularity criterion}\label{subsec:local-theory}
\begin{definition}[Classical solution]\label{def:classical}
Following \cite[Definition~2]{abdo2025well}, let $T>0$. A function $u$ is a classical solution of \eqref{eq:main}--\eqref{eq:bc} on $[0,T]$ with initial data $u_0$ if it satisfies the system almost everywhere, with $\wh$ given by \eqref{eq:w}, $u(0)=u_0$, and the regularity
\[
\begin{gathered}
u\in C([0,T];\HH),\quad
\Lh^\alpha u\in C([0,T];L^2),\quad
\Lh^{\frac{3\alpha}{2}}u\in L^2(0,T;L^2),\\
\omega=\pa_zu\in C([0,T];L^2)\cap L^\infty(0,T;L^\infty),\quad
\Lh^{\frac\alpha2}\omega\in C([0,T];L^2),\quad
\Lh^\alpha\omega\in L^2(0,T;L^2),\\
\pa_z\omega\in C([0,T];L^2),\quad
\Lh^{\frac\alpha2}\pa_z\omega\in L^2(0,T;L^2).
\end{gathered}
\]
\end{definition}
The bound \eqref{eq:uomegainf} also gives $u\in L^\infty(0,T;L^\infty)$.

\begin{theorem}[Local well-posedness and regularity criterion]\label{thm:LWP-BKM}
Let $\nu>0$ and $1<\alpha<2$. Assume that the initial data satisfy \eqref{eq:init}.
There exists $T_0>0$ such that \eqref{eq:main}--\eqref{eq:bc} has a unique classical solution on $[0,T_0]$ with initial data $u_0$. The solution depends continuously on the initial data in $C([0,T_0];L^2)$ on sets on which the initial norms in \eqref{eq:init} are uniformly bounded, with a common choice of $T_0$.

Moreover, a classical solution on $[0,T)$, with $T<\infty$, extends beyond $T$ as a classical solution provided that \eqref{eq:BKM} holds.
\end{theorem}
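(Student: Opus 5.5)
The plan is to take this result essentially from \cite{abdo2025well}: Definition~\ref{def:classical} is \cite[Definition~2]{abdo2025well} verbatim, and both the local theory and the continuation criterion \eqref{eq:BKM} for $1<\alpha<2$ are established there. The proof therefore consists of checking that the hypotheses \eqref{eq:init} are exactly those of the classical-solution result in \cite{abdo2025well}, and then citing it. For completeness, I would recall the structure of that argument as follows.

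\emph{Existence.} First regularize, for instance by a horizontal Galerkin truncation or by adding $\epsilon\Delta$ viscosity. The truncation must preserve $\HH$, the boundary condition \eqref{eq:bc}, and the cancellation of the transport terms. For the approximations, derive a priori bounds for the energy
\[
E(t)=\|u\|_{L^2}^2+\|\Lh^\alpha u\|_{L^2}^2+\|\omega\|_{L^2}^2+\|\Lh^{\frac\alpha2}\omega\|_{L^2}^2+\|\pa_z\omega\|_{L^2}^2 ,
\]
together with the dissipation of each piece. The nonlinear terms are controlled using \eqref{eq:max}, \eqref{eq:uomegainf} and \eqref{eq:w}. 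The only dangerous term is the vertical transport $\wh\pa_z$, which costs one horizontal derivative. Since $\alpha>1$, this loss fits inside the gain $\Lh^{\alpha/2}$ from both the energy and the dissipation factors, which yields a Riccati-type inequality $E'\lesssim_\nu E^{p}$ for some $p>1$. This gives a time $T_0$ depending only on the norms in \eqref{eq:init}. Passing to the limit by Aubin--Lions compactness and weak-star limits gives a solution; the time continuity is recovered from the energy identities.

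\emph{Uniqueness and continuous dependence.} For two solutions, estimate $\delta u=u_1-u_2$ in $L^2$. The difficult term is $\int\delta\wh\,\omega_2\,\delta u$, where $\delta\wh$ loses a derivative. Bound it by $\|\Lh^{1/2}\delta u\|^2$ after an integration by parts in $x$ and $z$, using the regularity of $u_2$ (including $\pa_z\omega_2$). Since $\alpha>1$, this is absorbed into $\nu\|\Lh^{\alpha/2}\delta u\|^2$, and Gronwall gives Lipschitz dependence in $C([0,T_0];L^2)$ with constants uniform on bounded sets of data.

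\emph{Continuation.} The criterion is the step I expect to be the main obstacle. The idea is to redo the $E$ estimate so that the growth factor is linear in $E$, with a coefficient integrable exactly when \eqref{eq:BKM} holds. The crucial term is $\int\Lh^{\alpha/2}(\wh\pa_z\omega)\Lh^{\alpha/2}\omega$ together with the $\pa_z\omega$ equation. Interpolation between $\|\omega\|_{L^\infty}$ and the dissipation leaves a coefficient $\|\Lh^{\frac{3-\alpha}2}\omega\|_{L^2}^2$, and Gronwall then bounds $E$ up to $T$. Restarting the local theory from a time near $T$, with the uniform $T_0$, extends the solution beyond $T$. I would simply cite \cite{abdo2025well} for these computations.
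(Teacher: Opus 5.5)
Your proposal is correct and matches the paper, which also proves this theorem purely by citation to \cite{abdo2025well}: existence and uniqueness from its Theorems~4.7--4.8, continuous dependence from the difference estimate in the proof of its Theorem~4.3, and the continuation criterion from its Proposition~4.4, which propagates the classical energy under \eqref{eq:BKM}. Your sketch is a reasonable outline of that reference but is not needed beyond the citation: a Riccati bound that uses $\alpha>1$ to absorb the one-derivative loss from $\wh$, an $L^2$ difference estimate, and Gronwall with a coefficient controlled by \eqref{eq:BKM}, followed by a restart with a uniform $T_0$.
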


Local existence and uniqueness follow from \cite[Theorems~4.7--4.8]{abdo2025well}, and continuous dependence follows from the difference estimate in the proof of \cite[Theorem~4.3]{abdo2025well}. The continuation criterion is a consequence of \cite[Proposition~4.4]{abdo2025well}, which propagates the classical energy under \eqref{eq:BKM}.

For smoother initial data, the same local and continuation theory holds for {\it smooth solutions}; see \cite[Definition~3, Theorem~4.11, and Proposition~4.12]{abdo2025well}. On the other hand, for rougher initial data, \cite[Definition~1 and Theorems~4.2--4.3]{abdo2025well} introduces {\it strong solutions} and establishes global existence together with a conditional uniqueness and continuous-dependence criterion. In this paper, we focus on classical solutions.

When $3/2\leq\alpha<2$, we have $(3-\alpha)/2\leq\alpha/2$, so \eqref{eq:BKM} follows directly from the vorticity energy bound \eqref{eq:omegaL2}. Consequently, classical solutions are globally well-posed in this range; see \cite[Theorem~4.10]{abdo2025well}. Our goal is to verify the same criterion in the remaining range $1<\alpha<3/2$.

\subsection{Fractional Leibniz rule and commutator estimates}\label{subsec:commutator-estimates}
The following estimates act only in the horizontal variable. We apply them at fixed $z$ and then integrate in $z$ using H\"older's inequality.

\begin{lemma}[Fractional Leibniz rule]\label{lem:leibniz}
Let $s>0$, $1<r<\infty$, and $1<p_i,q_i\leq\infty$ satisfy
$1/r=1/p_i+1/q_i$ for $i=1,2$. For smooth periodic functions $f,g$,
\begin{equation}\label{eq:leibniz}
\|\Lh^s(fg)\|_{L_x^r}
\leq C\Bigl(
\|f\|_{L_x^{p_1}}\|\Lh^sg\|_{L_x^{q_1}}
+\|\Lh^sf\|_{L_x^{p_2}}\|g\|_{L_x^{q_2}}
\Bigr).
\end{equation}
\end{lemma}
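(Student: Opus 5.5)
This is the standard fractional Leibniz (Kato--Ponce) inequality on the one-dimensional torus, and I plan to prove it with a Littlewood--Paley paraproduct decomposition in the horizontal variable.

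\emph{Setup.} Let $\Delta_j$, $j\geq0$, be periodic Littlewood--Paley projections. For $j\ge1$ the symbol of $\Delta_j$ is supported where $|n|\simeq 2^j$, and $\Delta_0$ captures the mean and the lowest frequencies. Let $S_j=\sum_{k<j}\Delta_k$. The periodic projections are obtained by periodizing Schwartz multipliers on $\R$ (Poisson summation), so they inherit the uniform $L^p$ bounds, the Bernstein inequalities and the Littlewood--Paley square-function characterization for $1<p<\infty$.

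\emph{Bony decomposition.} Write
\[
fg=\sum_j S_{j-1}f\,\Delta_jg+\sum_j\Delta_jf\,S_{j-1}g+\sum_{|j-k|\le1}\Delta_jf\,\Delta_kg=:T_fg+T_gf+R(f,g).
\]
\begin{enumerate}
\item For the paraproduct $T_fg$, each term has frequency $\simeq2^j$. The square function gives
\[
\|\Lh^sT_fg\|_{L^r}\lesssim\Bigl\|\Bigl(\sum_j2^{2js}|S_{j-1}f|^2|\Delta_jg|^2\Bigr)^{1/2}\Bigr\|_{L^r}\le\|Mf\|_{L^{p_1}}\|(\textstyle\sum 2^{2js}|\Delta_jg|^2)^{1/2}\|_{L^{q_1}}.
\]
Here $|S_{j-1}f|\le Mf$ (Hardy--Littlewood maximal function), and Hölder's inequality applies. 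Both factors are then bounded by $\|f\|_{L^{p_1}}\|\Lh^sg\|_{L^{q_1}}$. When $p_1=\infty$ I use $|S_{j-1}f|\lesssim\|f\|_{L^\infty}$ directly. When $q_1=\infty$ (so $p_1=r<\infty$), I instead use $L^\infty$ bounds for $\Delta_j\Lh^sg$ together with the $\ell^2$-valued maximal inequality applied to $f$. This endpoint is the delicate one; here I would follow Grafakos--Oh or Muscalu--Schlag.
\item The term $T_gf$ is handled symmetrically and gives $\|\Lh^sf\|_{L^{p_2}}\|g\|_{L^{q_2}}$.
\item For the resonant term $R$, frequencies are only $\lesssim2^j$. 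Since $s>0$, I sum
\[
\Lh^s\Delta_k R=\sum_{j\ge k-3}2^{(k-j)s}\,\bigl(2^{-(k-j)s}\,2^{ks}\dots\bigr),
\]
so that the geometric factor $2^{(k-j)s}$ allows a Schur-type bound in $\ell^2$. The result is controlled by $\|\sup_j|\Delta_jf|\|_{L^{p_2}}\|(\sum2^{2js}|\Delta_jg|^2)^{1/2}\|_{L^{q_2}}$ (or symmetrically). The condition $s>0$ is exactly what this summation needs.
\end{enumerate}

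\emph{Expected obstacle.} The main difficulty is the endpoint cases $p_i=\infty$ or $q_i=\infty$, because square functions fail on $L^\infty$. In practice the estimate is only used at fixed $z$, followed by Hölder's inequality in $z$. So I would cite the known periodic Kato--Ponce inequality, including the $L^\infty$ endpoints with $1<r<\infty$ (Grafakos--Oh, Bourgain--Li), and transfer it to $\T$ via the periodization argument above rather than reproving it.
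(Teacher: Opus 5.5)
Your plan agrees with the paper, which quotes this horizontal Kato--Ponce inequality as a standard fact without proof and only ever applies it with $r=2$ and the undifferentiated factor in $L^\infty_x$ (to $u^2$ and $\omega^2$), a case your paraproduct sketch already handles correctly. One correction if you write the sketch out: at the endpoint $q_1=\infty$ you cannot close the low--high paraproduct with $\|\Lh^sg\|_{L^\infty}$ plus an $\ell^2$-valued maximal inequality on $f$. The $S_{j-1}f$ are partial sums, so $\bigl(\sum_j|S_{j-1}f|^2\bigr)^{1/2}$ diverges. That case genuinely needs the Coifman--Meyer bound $L^r\times L^\infty\to L^r$, proved via a Carleson measure/$BMO$ argument, which is exactly what you would be importing from Grafakos--Oh.
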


For a horizontal operator $B$, write $\comm{B}{f}g=B(fg)-fBg$. We have the following Kato--Ponce type commutator estimate \cite{kato1988commutator}. The $L^1$ endpoint case stated below follows from \cite[Theorem~1.2]{li2019kato}.

\begin{lemma}[Commutator estimate]\label{lem:KP}
Let $s>0$. For smooth periodic functions $f,g$,
\begin{equation}\label{eq:KP}
\|\comm{\Lh^s}{f}g\|_{L_x^1}
\leq C_s\Bigl(
\|\pa_xf\|_{L_x^2}\|\Lh^{s-1}g\|_{L_x^2}
+\|\Lh^s f\|_{L_x^2}\|g\|_{L_x^2}
\Bigr).
\end{equation}
When $0<s<1$, assume $\mean{g}=0$ so that the negative homogeneous power is well defined.
\end{lemma}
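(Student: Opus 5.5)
Since this is the Kato--Ponce commutator estimate at the $L^1$ endpoint, the plan is a horizontal paraproduct argument on $\T$. It works at fixed $z$ and uses only bilinear Fourier multiplier bounds with target $L^1_x$. Write $\widehat f_n,\widehat g_m$ for the horizontal Fourier coefficients. Then
\[
\widehat{\comm{\Lh^s}{f}g}_k=\sum_{n+m=k}\bigl((2\pi|n+m|)^s-(2\pi|m|)^s\bigr)\widehat f_n\widehat g_m .
\]
The mode $n=0$ contributes nothing, so $f$ may be replaced by $f-\mean f$. Likewise the mode $m=0$ of $g$ appears only through $|n|^s\widehat f_n\widehat g_0$, which is bounded by $\|\Lh^sf\|_{L^2_x}\|g\|_{L^2_x}$. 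I would insert a dyadic Littlewood--Paley decomposition $f=\sum_j P_jf$, $g=\sum_k P_kg$ and split the sum into three regimes: $|n|\ll|m|$ (low--high), $|n|\gg|m|$ (high--low), and $|n|\simeq|m|$ (high--high).

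\emph{Low--high.} Here $|m|\gtrsim|n|\geq1$. By the mean value theorem, the symbol $(|n+m|^s-|m|^s)$ equals $n\,|m|^{s-1}\,a(n,m)$, where $a$ is smooth and satisfies Coifman--Meyer symbol bounds on the cone $|n|\le c|m|$. The corresponding bilinear operator, applied to $\pa_xf$ and $\Lh^{s-1}g$, is then bounded $L^2_x\times L^2_x\to L^1_x$ by the Coifman--Meyer theorem. The exponents $p=q=2$, $r=1$ are admissible, and the periodic version follows by transference. This gives the first term in \eqref{eq:KP}. This is also where the assumption $\mean g=0$ for $0<s<1$ is used: it makes $\Lh^{s-1}g$ meaningful, and the regime never involves $m=0$ anyway.

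\emph{High--low.} Here $|m|\ll|n|$. Both $|n+m|^s$ and $|m|^s$ are $\lesssim|n|^s$. Write each piece as $|n|^s$ times a smooth bounded symbol, namely $|n+m|^s/|n|^s$ and $|m|^s/|n|^s$ localized to the cone. Coifman--Meyer with $\Lh^sf\in L^2_x$ and $g\in L^2_x$ then gives the second term.

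\emph{High--high.} Here $|n|\simeq|m|\sim2^k$ and the output frequency is $2^\ell\lesssim2^k$. The term $fP_k\Lh^sg$ is handled directly, since $2^{ks}\|P_kg\|_{L^2}\simeq\|\Lh^sP_kg\|$ and one can trade this for $2^{ks}\|P_kf\|\|P_kg\|$. For $\Lh^s(P_kf\,\tilde P_kg)$, I would use that $P_\ell$ and $2^{-\ell s}\Lh^sP_\ell$ have $L^1$-bounded kernels, uniformly in $\ell$. This gives
\[
\sum_{\ell\lesssim k}2^{\ell s}\|P_\ell(P_kf\,\tilde P_kg)\|_{L^1}\lesssim 2^{ks}\|P_kf\|_{L^2}\|\tilde P_kg\|_{L^2}.
\]
Summing over $k$ with Cauchy--Schwarz yields $\|\Lh^sf\|_{L^2}\|g\|_{L^2}$, because $s>0$ makes the geometric sum in $\ell$ converge.

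The main obstacle is the $L^1$ endpoint. Neither $\Lh^s$ nor the Littlewood--Paley square function is bounded on $L^1$, so the usual argument via the fractional Leibniz rule (Lemma~\ref{lem:leibniz}) breaks down at $r=1$. The plan avoids this in two ways. In the off-diagonal regimes it relies on the Coifman--Meyer theorem, whose target range includes $r=1$ (indeed $r\geq1/2$) when $p,q>1$. In the diagonal regime it bounds frequency-localized pieces directly through $L^1$-bounded kernels, instead of using any square-function characterization of $L^1$. If the Coifman--Meyer step at $r=1$ needs justification on the torus, I would instead cite the endpoint commutator result \cite[Theorem~1.2]{li2019kato} and transfer it to $\T$.
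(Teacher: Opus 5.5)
The paper does not prove this lemma. It quotes Kato--Ponce \cite{kato1988commutator} and takes the $L^1$ endpoint from \cite[Theorem~1.2]{li2019kato}, which is your fallback. Your paraproduct argument is therefore a genuinely different, essentially self-contained route, and most of it is sound:
the zero modes are treated correctly;
the low--high symbol $n|m|^{s-1}a(n,m)$, with $a$ homogeneous of degree zero and smooth on the cone $|n|\leq c|m|$, is of Coifman--Meyer type;
the diagonal regime is fine, using the uniformly $L^1$-bounded kernels of $2^{-\ell s}\Lh^sP_\ell$ and the geometric sum coming from $s>0$.

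\textbf{One step fails as written.} In the high--low regime, consider the piece coming from $f\,\Lh^sg$, namely $\sum_jP_jf\,\Lh^s\sum_{k\leq j-5}P_kg$, whose symbol is $|m|^s/|n|^s$ on the cone $|m|\leq c|n|$. This is \emph{not} a Coifman--Meyer symbol. One has $|\pa_\eta^\beta(|\eta|^s|\xi|^{-s})|\simeq|\eta|^{s-\beta}|\xi|^{-s}$, whereas the Coifman--Meyer condition requires a bound $\lesssim|\xi|^{-\beta}$. The ratio $(|\xi|/|\eta|)^{\beta-s}$ is unbounded for $\beta>s$ unless $s$ is an even integer.

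Choosing another extension of the lattice symbol does not help. For $0<s<2$, its second difference in $m$ at $m=0$ is $\simeq|n|^{-s}$, while any Coifman--Meyer symbol would give $\lesssim|n|^{-2}$. The symbol varies at the scale of the low frequency, not at the scale of $|n|+|m|$. Since the paper uses the lemma at orders $2s\in(0,3)$, this is not a corner case.

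The repair needs no multiplier theorem. Apply Cauchy--Schwarz in $x$ block by block:
\[
\Bigl\|\sum_{j}P_jf\,\Lh^s\sum_{k\leq j-5}P_kg\Bigr\|_{L_x^1}
\lesssim\sum_{d\geq5}2^{-ds}\sum_{k\geq0}2^{(k+d)s}\|P_{k+d}f\|_{L_x^2}\|P_kg\|_{L_x^2}
\lesssim_s\|\Lh^sf\|_{L_x^2}\|g\|_{L_x^2}.
\]
This is the same geometric summation you use on the diagonal, and that the paper uses for $\Ka^{(1)}$. Only the other high--low piece, $\Lh^s\bigl(\sum_jP_jf\sum_{k\leq j-5}P_kg\bigr)$ with the smooth symbol $|n+m|^s/|n|^s$, genuinely needs the $L^2\times L^2\to L^1$ Coifman--Meyer bound. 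There the triangle inequality in $j$ would only give the Besov sum $\sum_j2^{js}\|P_jf\|_{L_x^2}$.

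\textbf{What each route buys.} The citation gives the estimate at once and in full generality. Your argument shows exactly where $s>0$ and the $L^1$ endpoint enter, at the cost of bilinear Coifman--Meyer theory and its transference to $\T$.

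Note also that the paper only applies the lemma to single blocks $f=\wh_\ell$, $g=u_j$ with $\ell\leq j-5$. There $\comm{\Lh^{2s}}{f}g=\comm{\Lh^{2s}\widetilde\Delta_j}{f}g$. With $K_j$ the kernel of $\Lh^{2s}\widetilde\Delta_j$,
\[
\comm{\Lh^{2s}}{f}g(x)=\int_{\T}K_j(y)\bigl(f(x-y)-f(x)\bigr)g(x-y)\,\dd y .
\]
The mean value theorem together with $\int|y|\,|K_j(y)|\,\dd y\lesssim2^{(2s-1)j}$ then yields the bound $\|\pa_xf\|_{L_x^2}\|\Lh^{2s-1}g\|_{L_x^2}$ directly, with no multiplier theorem at all.
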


\subsection{Littlewood--Paley decomposition and Bony paraproducts}\label{subsec:LP}
Let $\{\Delta_j\}_{j\geq0}$ be a smooth Littlewood--Paley decomposition on the nonzero horizontal Fourier modes. We choose real, even symbols supported in $2^{j-1}\leq|n|\leq2^{j+1}$ and forming a partition of unity for $n\neq0$. We write
\[
\begin{aligned}
f =\mean{f}+\sum_{j\geq0}\Delta_jf,\qquad
S_jf = \mean{f}+\sum_{0\leq\ell\leq j}\Delta_\ell f,\qquad
\widetilde\Delta_jf&=\sum_{\substack{\ell\geq0\\|\ell-j|<5}}\Delta_\ell f.
\end{aligned}
\]
For $s\in\R$ and functions with zero horizontal mean,
\begin{equation}\label{eq:LP-hom}
\|\Lh^sf\|_{L^2}^2\simeq\sum_{j\geq0}2^{2js}\|\Delta_jf\|_{L^2}^2.
\end{equation}
For $s>0$, \eqref{eq:LP-hom} also holds without the zero-mean assumption, since both sides ignore the horizontal mean.
We use a fixed gap of five dyadic levels and define
\[
T_fg=\sum_{j\geq0}(S_{j-5}f)\Delta_jg.
\]

The dyadic blocks and low-frequency cutoffs act only in $x$ and commute with $\pa_z$ and the vertical integration in \eqref{eq:w}. The horizontal mean is kept separate from the dyadic blocks. In particular, negative homogeneous powers in \eqref{eq:LP-hom} act only on nonzero modes.

We use the horizontal Bernstein inequalities
\begin{equation}\label{eq:Bernstein}
\begin{aligned}
\|\pa_x^k\Delta_jf\|_{L_x^q}
&\leq C2^{j(k+\frac1p-\frac1q)}\|\Delta_jf\|_{L_x^p},
\qquad k\in\N\cup\{0\},\quad 1\leq p\leq q\leq\infty,\\
\|\Lh^s\Delta_jf\|_{L_x^p}
&\simeq 2^{sj}\|\Delta_jf\|_{L_x^p},
\qquad s\in\R,\quad 1\leq p\leq\infty.
\end{aligned}
\end{equation}
These estimates hold at fixed $z$ and hence also in the corresponding mixed norms. The dyadic blocks and low-frequency cutoffs satisfy
\begin{equation}\label{eq:LP-cutoff}
\|\Delta_j f\|_{L_x^p}+\|S_jf\|_{L_x^p}
\leq C\|f\|_{L_x^p},
\qquad 1\leq p\leq\infty,\quad j\geq0.
\end{equation}
The bound for $S_j$ also holds when $j<0$, since then $S_jf=\mean{f}$.
For a function with horizontal Fourier support in $|n|\leq C_0 2^j$, we also use
\begin{equation}\label{eq:Bernstein-ball}
\|\Lh^s f\|_{L_x^p}\leq C_{s,C_0}2^{sj}\|f\|_{L_x^p},
\qquad s>0,\quad 1\leq p\leq\infty.
\end{equation}
A product of a $\Delta_j$ block and an $S_{j-5}$ cutoff is supported in
$2^{j-2}<|n|<2^{j+2}$. The multiplier $\widetilde\Delta_j$ equals one on this annulus. This also holds when $j<5$, since the low-frequency cutoff then contains only the horizontal mean.

With the above convention, Bony's decomposition reads
\begin{equation}\label{eq:bony}
fg=T_fg+T_gf+R(f,g)+\mean{f}\,\mean{g},
\qquad
R(f,g)=\sum_{j\geq0}\Delta_jf\,\widetilde\Delta_jg.
\end{equation}
Indeed, pairs of nonzero dyadic blocks split into the regions $\ell\leq j-5$, $j\leq\ell-5$, and $|j-\ell|<5$. The paraproducts also include the interactions of a horizontal mean with a nonzero mode, while $\mean{f}\,\mean{g}$ accounts for the two zero modes. The terms $T_fg$, $T_gf$, and $R(f,g)$ represent the low--high, high--low, and comparable-frequency interactions, respectively. Applying \eqref{eq:bony} to $fg$ and $f\Lh^\sigma g$, when $\mean{f}=0$ and $\sigma\geq0$, gives
\begin{equation}\label{eq:bony-commutator}
[\Lh^\sigma,f]g
=[\Lh^\sigma,T_f]g
+\sum_{j\geq0}[\Lh^\sigma,\Delta_jf]S_{j-5}g
+\sum_{j\geq0}[\Lh^\sigma,\Delta_jf]\widetilde\Delta_jg,
\end{equation}
where $[\Lh^\sigma,T_f]g=\Lh^\sigma(T_fg)-T_f(\Lh^\sigma g)$.

\section{The hydrostatic energy estimate}\label{sec:commutator}
Throughout this section, $u\in C^\infty(\overline\Omega;\R)\cap\HH$, $\omega=\pa_zu$, and $\wh$ is given by \eqref{eq:w}. We establish our main estimate \eqref{eq:hydrostatic-vort} for
\[
 \K = \int_\Omega \Lh^s\bigl( u\pa_xu + \wh \omega \bigr)\,\Lh^s u\, \dxdz.
\]

\subsection{The symmetric--antisymmetric decomposition}
We start with the following decomposition:
\[
\omega \Lh^{2s} u = \frac12\bigl( \omega \Lh^{2s} u + u \Lh^{2s} \omega \bigr) + \frac12\bigl( \omega \Lh^{2s} u - u \Lh^{2s} \omega \bigr).
\]
Note that
\[
\omega \Lh^{2s} u + u \Lh^{2s} \omega = \pa_z ( u \Lh^{2s} u ).
\]
Hence, we can rewrite the vertical contribution as
\begin{align*}
 \int_\Omega \Lh^s\bigl(\wh \omega \bigr)\,\Lh^s u\, \dxdz
 & = \int_\Omega \wh\, \omega \,\Lh^{2s} u\, \dxdz\\
 & = \frac12\int_\Omega \wh\, \pa_z ( u \Lh^{2s} u ) \dxdz
  + \frac12\int_\Omega \wh \bigl( \omega \Lh^{2s} u - u \Lh^{2s} \omega \bigr) \dxdz\\
 & = \frac12\int_\Omega \pa_x u\, ( u \Lh^{2s} u ) \dxdz
  + \frac12\int_\Omega \bigl( \wh\, \omega \Lh^{2s} u - \Lh^{2s}(\wh u)\,  \omega \bigr) \dxdz\\
 & = \frac12\int_\Omega \Lh^s\bigl( u\pa_xu \bigr)\,\Lh^s u\, \dxdz
  - \frac12 \int_\Omega \omega\,\comm{\Lh^{2s}}{\wh}u\,\dxdz.
\end{align*}
Therefore, we can decompose $\K$ into a symmetric and an antisymmetric part:
\[
\K = \Ks + \Ka,
\]
where
\begin{align}
 \Ks & = \frac32\int_\Omega \Lh^s\bigl( u\pa_xu \bigr)\,\Lh^s u\, \dxdz,\notag\\
 \Ka & = - \frac12 \int_\Omega \omega\,\comm{\Lh^{2s}}{\wh}u\,\dxdz.\label{eq:Ka}
\end{align}
We estimate the two parts in Propositions~\ref{prop:sym} and~\ref{prop:antisym}, respectively. Combining these results yields Theorem~\ref{thm:hydrostatic}.

\subsection{The symmetric part}
The symmetric part $\Ks$ does not involve the vertical velocity $\wh$. We apply the fractional Leibniz rule \eqref{eq:leibniz} and obtain the following bound.
\begin{proposition}\label{prop:sym}
	For every $s\ge0$, we have
	\begin{equation}\label{eq:sym-bound}
		|\Ks| \leq C_s\|\omega\|_{L^\infty}\|\Lh^{s+\frac12}u\|_{L^2}^2.
	\end{equation}
\end{proposition}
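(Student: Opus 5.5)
Write $u\pa_xu=\frac12\pa_x(u^2)$, so that, integrating by parts in $x$ at fixed $z$,
\[
\Ks=\frac34\int_\Omega \Lh^s\pa_x(u^2)\,\Lh^su\,\dxdz
=-\frac34\int_\Omega \Lh^{s-\frac12}(u^2)\,\HH_x\Lh^{s+\frac12}u\,\dxdz \cdot(\pm1),
\]
where the sign and the horizontal Hilbert transform only record $\pa_x=\HH_x\Lh$ on nonzero modes. The Hilbert transform is an isometry on $L^2_x$, so at fixed $z$ Cauchy--Schwarz gives
\[
|\Ks|\lesssim \int_0^1\|\Lh^{s+\frac12}(u^2)\|_{L^2_x}\|\Lh^{s-\frac12}u\|_{L^2_x}\,\dd z .
\]
This is the wrong split: it puts $s+\tfrac12$ derivatives on the product. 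We therefore distribute the derivatives as $s+\tfrac12$ on each factor. Writing $\Lh^s\pa_x=\Lh^{s+\frac12}\cdot(\HH_x\Lh^{\frac12})$ balances the count: $s+\tfrac12$ on $\Lh^su$ after moving $\Lh^{1/2}$, and $s+\tfrac12$ on $u^2$.

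Concretely,
\[
\Ks=\frac34\int_\Omega \Lh^{s+\frac12}(u^2)\,\HH_x\Lh^{s+\frac12}u\,\dxdz \cdot(\pm1),
\]
which requires only self-adjointness of $\Lh^{1/2}$ and skew-adjointness of $\HH_x$. Then, at fixed $z$, Lemma~\ref{lem:leibniz} with $r=2$, $p_i=\infty$, $q_i=2$ gives
\[
\|\Lh^{s+\frac12}(u^2)\|_{L^2_x}\le C\|u\|_{L^\infty_x}\|\Lh^{s+\frac12}u\|_{L^2_x}.
\]
Integrating in $z$ with Cauchy--Schwarz, and using \eqref{eq:uomegainf} to replace $\|u\|_{L^\infty}$ by $\|\omega\|_{L^\infty}$, yields \eqref{eq:sym-bound}.

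The case $s=0$ is trivial: $\Ks=\frac32\int u^2\pa_xu=0$ by periodicity, so we may assume $s>0$, as Lemma~\ref{lem:leibniz} requires. For $s+\tfrac12>0$ Lemma~\ref{lem:leibniz} applies directly. The only step needing care is the Fourier-side identity $\Lh^s\pa_x=\HH_x\Lh^{s+1}$ together with the transfer of half a derivative between the factors. Here the horizontal means are harmless: $\Lh^{s+\frac12}$ annihilates $\mean{u^2}$, and $\pa_x$ kills the zero mode. Since we use only the $p=\infty$ endpoint of the Leibniz rule with $r=2$, which is allowed in Lemma~\ref{lem:leibniz}, no obstacle remains. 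The constant depends only on $s$.
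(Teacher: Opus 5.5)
Your argument is correct and is essentially the paper's: both put $s+\frac12$ horizontal derivatives on each of $u^2$ and $u$, then apply the fractional Leibniz rule with the $L^\infty_x\times L^2_x$ pairing and \eqref{eq:uomegainf}; the paper writes $\Lh^{s-\frac12}\pa_x(u^2)$, whose $L^2_x$ norm equals that of $\Lh^{s+\frac12}(u^2)$, instead of introducing the Hilbert transform. Two minor points do not affect the final proof: the discarded ``wrong split'' display miscounts the total number of derivatives ($2s$ instead of $2s+1$), and the separate treatment of $s=0$ is unnecessary, since the Leibniz order $s+\frac12$ is positive for every $s\ge0$.
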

\begin{proof}
Since
\[
 \Ks = \frac32\int_\Omega \Lh^{s-\frac12}\bigl( u\pa_xu \bigr)\,\Lh^{s+\frac12} u\, \dxdz
 = \frac34\int_\Omega \Lh^{s-\frac12} \pa_x \bigl( u^2 \bigr)\,\Lh^{s+\frac12} u\, \dxdz,
\]
we apply H\"older's inequality and the fractional Leibniz rule \eqref{eq:leibniz} and obtain
\begin{align*}
 |\Ks| & \leq  \int_0^1 \|\Lh^{s-\frac12} \pa_x \bigl( u^2 \bigr) \|_{L^2_x} \|\Lh^{s+\frac12} u\|_{L^2_x}\,\dd z\\
 & \leq \int_0^1 \Bigl(C\|u\|_{L^\infty_x}\|\Lh^{s+\frac12} u\|_{L^2_x}\Bigr) \|\Lh^{s+\frac12} u\|_{L^2_x}\,\dd z
 \leq C \|u\|_{L^\infty} \|\Lh^{s+\frac12}u\|_{L^2}^2.
\end{align*}
Using \eqref{eq:uomegainf}, we conclude \eqref{eq:sym-bound}.
\end{proof}

\subsection{The antisymmetric part}
We now estimate the antisymmetric part $\Ka$, which contains the vertical velocity $\wh$. We first explain the role of the commutator structure heuristically for $0<s<3/2$; the case $s=0$ is immediate.

Applying H\"older's inequality directly yields
\[
 |\Ka|\leq \frac12\|\omega\|_{L^\infty}\|\comm{\Lh^{2s}}{\wh} u\|_{L^1}.
\]
To explain the role of the commutator, consider an interaction in which $\wh$ has horizontal frequency $\xi$ much smaller than the frequency $\eta$ of $u$. The commutator symbol satisfies
\[
\bigl||\xi+\eta|^{2s}-|\eta|^{2s}\bigr| \lesssim_s |\xi|\,|\eta|^{2s-1}, \qquad |\xi|\leq\frac12|\eta|.
\]
Since $\wh=-\int_0^z\pa_xu\,\dd\zeta$ and vertical integration is
bounded on $L^2$, the resulting horizontal derivative count is
\[
|\xi|^2|\eta|^{2s-1} = \left(\frac{|\xi|}{|\eta|}\right)^{\frac32-s} |\xi|^{s+\frac12}|\eta|^{s+\frac12}.
\]
Thus the commutator allows both velocity factors to be measured with $s+\frac12$ horizontal derivatives, with an additional decaying frequency ratio when $s<\frac32$. This decay makes the low--high interactions summable and suggests the desired bound $\|\omega\|_{L^\infty}\|\Lh^{s+\frac12}u\|_{L^2}^2$ for this contribution.

However, when $\wh$ carries the higher frequency $(|\xi|\gg|\eta|)$, the commutator no longer provides this gain. We have $\bigl||\xi+\eta|^{2s}-|\eta|^{2s}\bigr| \lesssim |\xi|^{2s}$. The resulting horizontal derivative count $|\xi|^{2s+1}$ cannot be balanced between the high-frequency and low-frequency velocity factors in this bilinear estimate.

To handle the high--low interactions, we return to the trilinear form and obtain an alternative representation of $\Ka$. Integration by parts in $z$, together with
$\pa_z\wh=-\pa_xu$, gives
\begin{align*}
\Ka & = -\frac12\int_\Omega \pa_zu\,\comm{\Lh^{2s}}{\wh}u\,\dxdz
=\frac12\int_\Omega u\,\pa_z\Bigl(\comm{\Lh^{2s}}{\wh}u\Bigr)\,\dxdz\\
& = -\frac12\int_\Omega u\,\comm{\Lh^{2s}}{\pa_xu}u\,\dxdz
+\frac12\int_\Omega u\,\comm{\Lh^{2s}}{\wh}\omega\,\dxdz.
\end{align*}
The first integral vanishes by skew-adjointness after summing all frequency interactions, but we retain both terms when examining the high--low contribution.

In the high--low regime, the coefficient of each commutator carries frequency $\xi$ and its argument carries frequency $\eta$, with $|\xi|\gg|\eta|$. The outer factor $u$ then carries frequency $\kappa=-(\xi+\eta)$, so $|\kappa|\simeq|\xi|$. In both terms, the commutator contributes at most $|\xi|^{2s}$, while $\pa_xu$ and $\wh=-\int_0^z\pa_xu\,\dd\zeta$ each cost one horizontal derivative of the velocity. The resulting derivative count can now be balanced as
\[
|\xi|^{2s+1}
\simeq |\xi|^{s+\frac12}|\kappa|^{s+\frac12},
\]
while the low-frequency argument, either $u$ or $\omega$, can be placed in $L^\infty$. Using \eqref{eq:uomegainf}, this suggests the desired bound $\|\omega\|_{L^\infty}\|\Lh^{s+\frac12}u\|_{L^2}^2$ for this contribution.

The key is that integration by parts replaces the outer high-frequency vorticity by velocity, allowing the derivatives to be shared between two comparable high-frequency velocity factors.

We now make these heuristics precise using Bony's decomposition.
\begin{proposition}\label{prop:antisym}
	For every $0\leq s<3/2$, we have
	\begin{equation}\label{eq:antisym-bound}
		|\Ka| \leq C_s\|\omega\|_{L^\infty}\|\Lh^{s+\frac12}u\|_{L^2}^2.
	\end{equation}
\end{proposition}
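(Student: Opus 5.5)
We would make the heuristic of the preceding discussion rigorous by splitting the commutator with \eqref{eq:bony-commutator}, applied with $\sigma=2s$ and $f=\wh$, which is legitimate because $\mean{\wh}=0$. The case $s=0$ is trivial, so assume $0<s<3/2$. This gives $\Ka=\Ka^{\mathrm{LH}}+\Ka^{\mathrm{HL}}+\Ka^{\mathrm{HH}}$, corresponding to $[\Lh^{2s},T_{\wh}]u$, $\sum_j[\Lh^{2s},\Delta_j\wh]S_{j-5}u$ and $\sum_j[\Lh^{2s},\Delta_j\wh]\widetilde\Delta_ju$. In each piece we put $\omega$ in $L^\infty$, or $u$ in $L^\infty$ via \eqref{eq:uomegainf}. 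The remaining task is to bound the two velocity factors in $L^2$ with $s+\frac12$ derivatives each, using \eqref{eq:LP-hom} and the Bernstein inequalities \eqref{eq:Bernstein}. Recall that $\Delta_j\wh=-\int_0^z\pa_x\Delta_ju$, so $\|\Delta_j\wh\|_{L^2}\lesssim2^j\|\Delta_ju\|_{L^2}$, and likewise in $L^\infty_zL^2_x$.

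\emph{Low--high part.} We write $[\Lh^{2s},T_{\wh}]u=\sum_j[\Lh^{2s},S_{j-5}\wh]\Delta_ju$. Since the product is supported where $|n|\simeq2^j$, we may insert $\widetilde\Delta_j$ on the outer side. The standard commutator kernel bound, namely a first-order Taylor expansion of the symbol $|\xi+\eta|^{2s}$ with $|\xi|\ll|\eta|$, gives at fixed $z$
\[
\|[\Lh^{2s},S_{j-5}\wh]\Delta_ju\|_{L^1_x}\lesssim2^{j(2s-1)}\sum_{\ell\le j-5}\|\pa_x\Delta_\ell\wh\|_{L^2_x}\|\Delta_ju\|_{L^2_x}.
\]
Then $\|\pa_x\Delta_\ell\wh\|\lesssim2^{2\ell}\|\Delta_\ell u\|_{L^\infty_zL^2_x}$, and vertical integration is bounded. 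Summing produces the factor $2^{(\ell-j)(3/2-s)}\,2^{\ell(s+1/2)}a_\ell\,2^{j(s+1/2)}a_j$, where $a_j=\|\Delta_ju\|_{L^2}$. Because $s<3/2$, this is a convolution with an $\ell^1$ kernel, and Young's inequality gives $\lesssim\|\Lh^{s+\frac12}u\|_{L^2}^2$. Here we need the $L^\infty_z$ norm of $\Delta_\ell\wh$, which costs only $\|\Delta_\ell\pa_xu\|_{L^1_z}$, so the estimate stays in $L^2$.

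\emph{High--low and high--high parts.} These are the main obstacle, since the commutator gains nothing there. As in the heuristic, we first integrate by parts in $z$ at the level of each dyadic piece. For the high--low piece we write
\[
\int\omega[\Lh^{2s},\Delta_j\wh]S_{j-5}u=-\int u\,\pa_z\bigl([\Lh^{2s},\Delta_j\wh]S_{j-5}u\bigr),
\]
using $\wh=0$ at $z=0,1$ to kill boundary terms. The $z$-derivative produces $[\Lh^{2s},-\Delta_j\pa_xu]S_{j-5}u+[\Lh^{2s},\Delta_j\wh]S_{j-5}\omega$. Each term is localized at frequency $\simeq2^j$, so $u$ may be replaced by $\widetilde\Delta_ju$. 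We then expand the commutator trivially, $\Lh^{2s}(fg)-f\Lh^{2s}g$, and use \eqref{eq:Bernstein-ball}, which bounds $\Lh^{2s}S_{j-5}g$ by $2^{2sj}\|g\|_{L^\infty}$ crudely. Each term is then $\lesssim2^{j(2s+1)}\|\Delta_ju\|_{L^2}\|\widetilde\Delta_ju\|_{L^2}\|\omega\|_{L^\infty}$, using $\|S_{j-5}u\|_{L^\infty}\le C\|\omega\|_{L^\infty}$ and $\|S_{j-5}\omega\|_{L^\infty}\le C\|\omega\|_{L^\infty}$ from \eqref{eq:LP-cutoff}. Summing in $j$ by Cauchy--Schwarz gives the bound. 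For the high--high piece no integration by parts should be needed. The term $\Delta_j\wh\,\Lh^{2s}\widetilde\Delta_ju$ paired with $\omega$ is directly $\lesssim\|\omega\|_{L^\infty}2^{j(2s+1)}a_j\tilde a_j$. For $\Lh^{2s}(\Delta_j\wh\,\widetilde\Delta_ju)$ against $\omega$ we move $\Lh^{2s}$ onto $\omega$ and decompose $\omega$ dyadically. If that is awkward, we instead apply the same $z$-integration by parts as above, after which the output frequency is at most $2^{j+3}$ and \eqref{eq:Bernstein-ball} gives $2^{2sj}$. Either way the total is $\sum_j2^{j(2s+1)}a_j\tilde a_j\lesssim\|\Lh^{s+\frac12}u\|_{L^2}^2$.

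The delicate point, which we expect to be the main obstacle, is justifying the dyadic-level integration by parts without losing the commutator structure or creating uncontrolled boundary terms. To settle this we would verify that $\Delta_j$ and $S_{j-5}$ commute with $\pa_z$, and that $\wh$ vanishes on $z=0,1$ blockwise.
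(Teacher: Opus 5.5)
Your proposal is correct and follows the paper's proof. It uses the same Bony splitting of $\comm{\Lh^{2s}}{\wh}u$; the commutator gain with kernel $2^{-(\frac32-s)(j-\ell)}$ for the low--high part, the only place $s<3/2$ enters; the $z$-integration by parts with $\pa_z\wh_j=-\pa_xu_j$ and the replacement of the outer $u$ by $\widetilde\Delta_ju$ for the high--low part; and a crude bound without commutator structure for the comparable-frequency part. The differences are cosmetic: the paper uses the $L^1$ Kato--Ponce endpoint \eqref{eq:KP} blockwise where you use a first-order Taylor/kernel commutator bound, and for the high--high term the paper applies \eqref{eq:Bernstein-ball} in $L^1_x$ directly to the product $\wh_j\widetilde\Delta_ju$, which is supported in $|n|\lesssim2^j$. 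That is equivalent to your dualization onto the low frequencies of $\omega$, so your fallback $z$-integration by parts there is unnecessary.
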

\begin{proof}
For $s=0$, the commutator in \eqref{eq:Ka} vanishes. We therefore assume $0<s<3/2$. Set
\[
u_j=\Delta_ju,\qquad
\wh_j=\Delta_j\wh,\qquad
U_j=2^{j(s+\frac12)}\|u_j\|_{L^2},\qquad
\widetilde U_j=2^{j(s+\frac12)}\|\widetilde\Delta_ju\|_{L^2}.
\]
The Littlewood--Paley equivalence \eqref{eq:LP-hom} and finite overlap give
\begin{equation}\label{eq:Usum-hom}
\sum_{j\geq0}\bigl(U_j^2+\widetilde U_j^2\bigr)
\leq C_s\|\Lh^{s+\frac12}u\|_{L^2}^2.
\end{equation}
Here only nonzero horizontal modes occur. Since the horizontal projections commute with vertical integration,
\[
\wh_j=-\int_0^z\pa_xu_j(x,\zeta)\,\dd\zeta,\qquad
\pa_z\wh_j=-\pa_xu_j,\qquad
\wh_j|_{z=0,1}=0.
\]
Consequently, Bernstein's inequality \eqref{eq:Bernstein} and Cauchy--Schwarz in $z$ imply
\begin{equation}\label{eq:blockw-hom}
\|\wh_j\|_{L^2}\leq C2^j\|u_j\|_{L^2},
\qquad
\|\pa_x\wh_j\|_{L^2}\leq C2^{2j}\|u_j\|_{L^2},
\qquad
\|\pa_xu_j\|_{L^2}\leq C2^j\|u_j\|_{L^2}.
\end{equation}

Since $\mean{\wh}=0$, Bony's decomposition \eqref{eq:bony-commutator} gives
\[
\comm{\Lh^{2s}}{\wh}u
=\sum_{j\geq0}\sum_{0\leq\ell\leq j-5} \comm{\Lh^{2s}}{\wh_\ell}u_j
+\sum_{j\geq0}\comm{\Lh^{2s}}{\wh_j}S_{j-5}u
+\sum_{j\geq0}\comm{\Lh^{2s}}{\wh_j}\widetilde\Delta_ju.
\]
We denote the corresponding contributions to $\Ka$ by
$\Ka^{(1)}$, $\Ka^{(2)}$, and $\Ka^{(3)}$, respectively, including the factor $-1/2$ from \eqref{eq:Ka}.

We start with the low--high interactions $\Ka^{(1)}$. Apply the commutator estimate \eqref{eq:KP} at derivative order $2s$, with $f=\wh_\ell$ and $g=u_j$. Integrating the resulting estimate in $z$ and using Cauchy--Schwarz, \eqref{eq:Bernstein}, and \eqref{eq:blockw-hom} yields
\begin{align*}
\|\comm{\Lh^{2s}}{\wh_\ell}u_j\|_{L^1}
&\leq C_s\Bigl(
\|\pa_x\wh_\ell\|_{L^2}\|\Lh^{2s-1}u_j\|_{L^2}
+\|\Lh^{2s}\wh_\ell\|_{L^2}\|u_j\|_{L^2}
\Bigr)\\
&\leq C_s\Bigl(
2^{2\ell}2^{(2s-1)j}+2^{(2s+1)\ell}
\Bigr)\|u_\ell\|_{L^2}\|u_j\|_{L^2}\\
&=C_s\Bigl(
2^{-(\frac32-s)(j-\ell)}
+2^{-(s+\frac12)(j-\ell)}
\Bigr)U_\ell U_j.
\end{align*}
Both exponents in the last line are positive. To sum these bounds, we use Cauchy--Schwarz: for any $a>0$,
\[
\sum_{j\geq0}\sum_{0\leq\ell\leq j-5} 2^{-a(j-\ell)}U_\ell U_j
= \sum_{m\geq 5}2^{-am}\sum_{\ell\geq0}U_\ell U_{\ell+m}
\leq \left(\sum_{m\geq 5}2^{-am}\right)\sum_{\ell\geq0}U_\ell^2
= \frac{2^{-5a}}{1-2^{-a}}\sum_{\ell\geq0}U_\ell^2.
\]
We thus obtain
\begin{equation}\label{eq:K1-hom}
|\Ka^{(1)}|
\leq \frac12\|\omega\|_{L^\infty}\sum_{j\geq0}\sum_{0\leq\ell\leq j-5}\|\comm{\Lh^{2s}}{\wh_\ell}u_j\|_{L^1}
\leq C_s\|\omega\|_{L^\infty}\sum_jU_j^2
\leq C_s\|\omega\|_{L^\infty}\|\Lh^{s+\frac12}u\|_{L^2}^2.
\end{equation}
The summability of the first kernel is the only step requiring $s<3/2$.

Next, we handle the comparable-frequency interactions $\Ka^{(3)}$. The product $\wh_j\widetilde\Delta_ju$ has horizontal Fourier support in $|n|\leq C2^j$. Expanding the commutator and applying \eqref{eq:Bernstein-ball} and \eqref{eq:Bernstein} gives
\begin{align*}
\|\comm{\Lh^{2s}}{\wh_j}\widetilde\Delta_ju\|_{L^1}
&\leq \|\Lh^{2s}(\wh_j\widetilde\Delta_ju)\|_{L^1}
+\|\wh_j\Lh^{2s}\widetilde\Delta_ju\|_{L^1}
\leq C_s2^{2sj}\|\wh_j\|_{L^2}\|\widetilde\Delta_ju\|_{L^2}\\
&\leq C_s2^{(2s+1)j}\|u_j\|_{L^2}\|\widetilde\Delta_ju\|_{L^2}
=C_sU_j\widetilde U_j.
\end{align*}
Note that we do not need to use the commutator structure here.
Hence, an argument similar to that in \eqref{eq:K1-hom} yields
\begin{equation}\label{eq:K3-hom}
|\Ka^{(3)}|
\leq C_s\|\omega\|_{L^\infty}\sum_jU_j\widetilde U_j
\leq C_s\|\omega\|_{L^\infty}\|\Lh^{s+\frac12}u\|_{L^2}^2.
\end{equation}

Finally, we estimate the high--low interactions $\Ka^{(2)}$.
For each $j\geq0$, set
\[
I_j=-\int_\Omega\omega\,\comm{\Lh^{2s}}{\wh_j}S_{j-5}u\,\dxdz.
\]
Thus $\Ka^{(2)}=\frac12\sum_jI_j$. Instead of applying H\"older's inequality directly, we integrate by parts in $z$, using $\pa_zS_{j-5}u=S_{j-5}\omega$, and obtain
\begin{align}
I_j
&=\int_\Omega uF_j\,\dxdz
=\int_\Omega u\,\pa_z\Bigl(\comm{\Lh^{2s}}{\wh_j}S_{j-5}u\Bigr)\,\dxdz\nonumber\\
&=-\int_\Omega u\,\comm{\Lh^{2s}}{\pa_xu_j}S_{j-5}u\,\dxdz
+\int_\Omega u\,\comm{\Lh^{2s}}{\wh_j}S_{j-5}\omega\,\dxdz\nonumber\\
&=-\int_\Omega u\,\Lh^{2s}\bigl((\pa_xu_j)\,S_{j-5}u\bigr)\,\dxdz
+\int_\Omega u\,\Lh^{2s}(\wh_j\,S_{j-5}\omega)\,\dxdz\nonumber\\
&\quad+\int_\Omega u\,(\pa_xu_j)\Lh^{2s}S_{j-5}u\,\dxdz
-\int_\Omega u\,\wh_j\Lh^{2s}S_{j-5}\omega\,\dxdz\nonumber\\
&\triangleq I_{j,1}+I_{j,2}+I_{j,3}+I_{j,4}.
\label{eq:fourterms-hom}
\end{align}
Here $F_j=\pa_z\bigl(\comm{\Lh^{2s}}{\wh_j}S_{j-5}u\bigr)$, and the boundary terms vanish because $\wh_j=0$ at $z=0,1$. We estimate the four terms separately, using the fact that only the horizontal frequencies of the outer velocity $u$ near $2^j$ contribute.
For $j\geq 5$, suppose that $f$ and $g$ have horizontal Fourier supports in
\[
2^{j-1}\leq|n|\leq2^{j+1},
\qquad
|n|\leq2^{j-4},
\]
respectively. The support of the product is contained in the sum of these supports, and hence
\[
\operatorname{supp}_n\widehat{fg}
\subset
\Bigl\{n\in\Z:
2^{j-1}-2^{j-4}\leq|n|
\leq2^{j+1}+2^{j-4}\Bigr\}
\subset\{n\in\Z:2^{j-2}<|n|<2^{j+2}\}.
\]
For $j<5$, the low-frequency factor has only the horizontal zero mode, so the product is still supported in the high-frequency annulus.
Horizontal differentiation and the multipliers $\Lh^{2s}$ do not enlarge Fourier support. Therefore $F_j$ and each of its four summands are supported in $2^{j-2}<|n|<2^{j+2}$. On this set the symbol of $\widetilde\Delta_j$ is identically one, by the partition of unity defining the dyadic blocks. Since $\widetilde\Delta_j$ is also self-adjoint, $F_j$ satisfies the exact identity
\begin{equation}\label{eq:replace}
\int_\Omega uF_j\,\dxdz
=\int_\Omega u\,\widetilde\Delta_jF_j\,\dxdz
=\int_\Omega(\widetilde\Delta_ju)F_j\,\dxdz.
\end{equation}
The same identity holds separately for each summand of $F_j$, permitting the replacement of the outer $u$ by $\widetilde\Delta_ju$ in all four terms of \eqref{eq:fourterms-hom}.

The inequalities \eqref{eq:Bernstein}--\eqref{eq:Bernstein-ball} give
\[
 \|S_{j-5}u\|_{L^\infty}\leq C\|u\|_{L^\infty},\quad
 \|\Lh^{2s}S_{j-5}u\|_{L^\infty}\leq C_s2^{2sj}\|u\|_{L^\infty},\quad
 \|\Lh^{2s}\widetilde\Delta_ju\|_{L^2} \leq C_s2^{2sj}\|\widetilde\Delta_ju\|_{L^2},
\]
and
\[
 \|S_{j-5}\omega\|_{L^\infty}\leq C\|\omega\|_{L^\infty},\quad
 \|\Lh^{2s}S_{j-5}\omega\|_{L^\infty}\leq C_s2^{2sj}\|\omega\|_{L^\infty}.
\]
For the first two terms in \eqref{eq:fourterms-hom}, apply \eqref{eq:replace} and move $\Lh^{2s}$ onto the outer velocity by self-adjointness. Using \eqref{eq:blockw-hom} and \eqref{eq:uomegainf}, we obtain
\begin{align*}
|I_{j,1}|
& = \left|\int_\Omega\Lh^{2s}\widetilde\Delta_j u\,(\pa_xu_j)\,S_{j-5}u\,\dxdz\right|
\leq\|\Lh^{2s}\widetilde\Delta_ju\|_{L^2}
\|\pa_xu_j\|_{L^2}\|S_{j-5}u\|_{L^\infty}\\
&\leq C_s\|u\|_{L^\infty}
2^{(2s+1)j}\|\widetilde\Delta_ju\|_{L^2}\|u_j\|_{L^2}
\leq C_s\|\omega\|_{L^\infty}\widetilde U_jU_j,\\
|I_{j,2}|
& = \left|\int_\Omega \Lh^{2s}\widetilde\Delta_j u\,\wh_j\,S_{j-5}\omega\,\dxdz\right|
\leq\|\Lh^{2s}\widetilde\Delta_ju\|_{L^2}
\|\wh_j\|_{L^2}\|S_{j-5}\omega\|_{L^\infty}\\
&\leq C_s\|\omega\|_{L^\infty}
2^{(2s+1)j}\|\widetilde\Delta_ju\|_{L^2}\|u_j\|_{L^2}
=C_s\|\omega\|_{L^\infty}\widetilde U_jU_j.
\end{align*}
For the other two terms, apply \eqref{eq:replace}, leave $\Lh^{2s}$ on the low-frequency factor, and use \eqref{eq:uomegainf}:
\begin{align*}
|I_{j,3}|
&\leq\|\widetilde\Delta_ju\|_{L^2}
\|\pa_xu_j\|_{L^2}\|\Lh^{2s}S_{j-5}u\|_{L^\infty}
\leq C_s\|u\|_{L^\infty}
2^{(2s+1)j}\|\widetilde\Delta_ju\|_{L^2}\|u_j\|_{L^2}
\leq C_s\|\omega\|_{L^\infty}\widetilde U_jU_j,\\
|I_{j,4}|
&\leq\|\widetilde\Delta_ju\|_{L^2}
\|\wh_j\|_{L^2}\|\Lh^{2s}S_{j-5}\omega\|_{L^\infty}
\leq C_s\|\omega\|_{L^\infty}
2^{(2s+1)j}\|\widetilde\Delta_ju\|_{L^2}\|u_j\|_{L^2}
=C_s\|\omega\|_{L^\infty}\widetilde U_jU_j.
\end{align*}
Summing over $j$ and using \eqref{eq:Usum-hom} yields
\begin{equation}\label{eq:K2-hom}
|\Ka^{(2)}|
\leq C_s\|\omega\|_{L^\infty}\|\Lh^{s+\frac12}u\|_{L^2}^2.
\end{equation}

Combining \eqref{eq:K1-hom}, \eqref{eq:K3-hom}, and \eqref{eq:K2-hom}, we obtain \eqref{eq:antisym-bound}.
\end{proof}

\section{Enhanced a priori bounds}\label{sec:apriori}
We now apply Theorem~\ref{thm:hydrostatic} to derive the estimates needed for continuation when $1<\alpha<3/2$. The velocity estimate below remains valid throughout $1<\alpha\leq2$. We work with a smooth solution on $[0,T]$, with $T$ in its lifespan. The bounds below depend only on the indicated initial norms and the parameters, and hence pass to classical solutions by smooth approximation whenever those initial norms are finite.

\begin{proposition}\label{prop:velocity}
Let $1<\alpha\leq2$ and $0\leq s<3/2$. A smooth solution with $u_0\in\HH$, $\Lh^su_0\in L^2$, and $\omega_0\in L^\infty$ satisfies
\begin{equation}\label{eq:velocitybound}
\sup_{0\leq t\leq T}\|\Lh^su(t)\|_{L^2}^2
+\nu\int_0^T\|\Lh^{s+\frac\alpha2}u(t)\|_{L^2}^2\,\dd t
\leq C_T.
\end{equation}
The constant depends on $\alpha,\nu,s,T,\|\Lh^su_0\|_{L^2}$, and $\|\omega_0\|_{L^\infty}$.
\end{proposition}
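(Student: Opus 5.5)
We will derive an energy inequality for $\|\Lh^su\|_{L^2}^2$ and close it with Gr\"onwall's inequality, using Theorem~\ref{thm:hydrostatic} as the only nonlinear input. Apply $\Lh^s$ to \eqref{eq:main}$_1$ and pair with $\Lh^su$ in $L^2(\Omega)$. The pressure term drops out, since $p$ is independent of $z$ and $\Lh^su$ has zero vertical mean; this is because $\HH$ is preserved and $\Lh^s$ acts only in $x$. We obtain
\[
\frac12\frac{\dd}{\dd t}\|\Lh^su\|_{L^2}^2+\nu\|\Lh^{s+\frac\alpha2}u\|_{L^2}^2
=-\int_\Omega\Lh^s(u\pa_xu+\wh\omega)\,\Lh^su\,\dxdz.
\]
For $s=0$ the right side vanishes identically. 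For $0<s<3/2$, Theorem~\ref{thm:hydrostatic} applied at each time bounds it by $C_s\|\omega\|_{L^\infty}\|\Lh^{s+\frac12}u\|_{L^2}^2$. The maximum principle \eqref{eq:max} then replaces $\|\omega(t)\|_{L^\infty}$ by $\|\omega_0\|_{L^\infty}$.

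The next step is to absorb the half-derivative loss. Because $\alpha>1$, we have $s<s+\frac12<s+\frac\alpha2$. Interpolating through \eqref{eq:LP-hom}, with H\"older's inequality on the Fourier side, gives
\[
\|\Lh^{s+\frac12}u\|_{L^2}\leq\|\Lh^su\|_{L^2}^{1-\theta}\|\Lh^{s+\frac\alpha2}u\|_{L^2}^{\theta},\qquad \theta=\frac1\alpha\in(0,1).
\]
The horizontal mean causes no trouble, since both sides ignore it for positive orders. Young's inequality with exponents $1/\theta$ and $1/(1-\theta)$ then yields
\[
C_s\|\omega_0\|_{L^\infty}\|\Lh^{s+\frac12}u\|_{L^2}^2\leq\frac\nu2\|\Lh^{s+\frac\alpha2}u\|_{L^2}^2+C_{s,\alpha,\nu}\|\omega_0\|_{L^\infty}^{\frac{\alpha}{\alpha-1}}\|\Lh^su\|_{L^2}^2.
\]
The exponent is $1/(1-\theta)=\alpha/(\alpha-1)$, as announced in the introduction.

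Substituting this bound, we get
\[
\frac{\dd}{\dd t}\|\Lh^su\|_{L^2}^2+\nu\|\Lh^{s+\frac\alpha2}u\|_{L^2}^2\leq C_{s,\alpha,\nu}\|\omega_0\|_{L^\infty}^{\frac\alpha{\alpha-1}}\|\Lh^su\|_{L^2}^2 .
\]
Gr\"onwall's inequality bounds $\sup_t\|\Lh^su\|_{L^2}^2$ by $e^{CT\|\omega_0\|_{L^\infty}^{\alpha/(\alpha-1)}}\|\Lh^su_0\|_{L^2}^2$. Integrating the inequality in time then controls the dissipation integral, which gives \eqref{eq:velocitybound} with the stated dependence.

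The only delicate point is justification. Theorem~\ref{thm:hydrostatic} is stated for $u\in C^\infty(\overline\Omega)\cap\HH$, while the proposition concerns a smooth solution on $[0,T]$. We apply the theorem pointwise in time to $u(t)$, which is legitimate because $\omega(t)$ is bounded and the constant $C_s$ does not depend on $u$. We also need that the energy identity above is valid and that the map $t\mapsto\|\Lh^su(t)\|_{L^2}^2$ is absolutely continuous. Both follow from the smoothness of the solution. Beyond this, the argument is routine; the real work was done in Theorem~\ref{thm:hydrostatic}.
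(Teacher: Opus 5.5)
Your proposal is correct and follows the paper's proof essentially step for step. Both arguments pair the equation with $\Lh^{2s}u$, apply Theorem~\ref{thm:hydrostatic} together with the maximum principle \eqref{eq:max}, and absorb the half-derivative loss by interpolation with $\theta=1/\alpha$ and Young's inequality before closing with Gr\"onwall's inequality.
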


\begin{proof}
Taking the $L^2$ inner product of \eqref{eq:main}$_1$ with $\Lh^{2s}u$, we obtain
\[
\frac12\frac{\dd}{\dd t}\|\Lh^su\|_{L^2}^2
+\nu\|\Lh^{s+\frac\alpha2}u\|_{L^2}^2+\K=0.
\]
The pressure term vanishes because $\pa_zp=0$ and $u\in\HH$. By Theorem~\ref{thm:hydrostatic}, \eqref{eq:uomegainf}, and the maximum principle \eqref{eq:max} for $\omega$,
\[
\frac12\frac{\dd}{\dd t}\|\Lh^su\|_{L^2}^2
+\nu\|\Lh^{s+\frac\alpha2}u\|_{L^2}^2
\leq C_s\|\omega_0\|_{L^\infty}\|\Lh^{s+\frac12}u\|_{L^2}^2.
\]
Since $\alpha>1$, interpolation and Young's inequality give
\begin{align*}
C_s\|\omega_0\|_{L^\infty}\|\Lh^{s+\frac12}u\|_{L^2}^2
&\leq C_s\|\omega_0\|_{L^\infty}\|\Lh^su\|_{L^2}^{2-\frac2\alpha}
\|\Lh^{s+\frac\alpha2}u\|_{L^2}^{\frac2\alpha}\\
&\leq\frac\nu2\|\Lh^{s+\frac\alpha2}u\|_{L^2}^2
+C_{s,\alpha,\nu}\|\omega_0\|_{L^\infty}^{\frac\alpha{\alpha-1}}\|\Lh^su\|_{L^2}^2.
\end{align*}
Consequently,
\begin{equation}\label{eq:velocity-energy}
\frac{\dd}{\dd t}\|\Lh^su\|_{L^2}^2
+\nu\|\Lh^{s+\frac\alpha2}u\|_{L^2}^2
\leq C_{s,\alpha,\nu}\|\omega_0\|_{L^\infty}^{\frac\alpha{\alpha-1}}\|\Lh^su\|_{L^2}^2.
\end{equation}
Gr\"onwall's inequality and integration in time yield \eqref{eq:velocitybound}.
\end{proof}

Proposition~\ref{prop:velocity} substantially strengthens the velocity estimate in \cite[Lemma~3.1]{tan2025global}, which gives \eqref{eq:velocitybound} only for
\[
0\leq s\leq\delta_1\triangleq\frac{2\alpha^2-\alpha}{2},
\]
while our estimate holds for every $0\leq s<3/2$ for which the initial velocity norm is finite. Under the classical initial-data assumptions \eqref{eq:init}, this improvement is sufficient to verify the continuation criterion \eqref{eq:BKM} directly, without the iteration used in \cite{tan2025global}. Let us explain the parameter choices and how the argument works.

We first obtain an a priori control of $\|\pa_z\omega\|_{L^2}$. The estimate in the proof of \cite[Lemma~3.2]{tan2025global} requires a uniform velocity bound at the index
\[
\delta_*=\max\left\{\frac\alpha2,\frac{-2\alpha^2+2\alpha+1}{\alpha}\right\}.
\]
For $1<\alpha<3/2$, one checks that $\delta_*\leq1<\alpha<3/2$. Thus the classical initial-data assumptions \eqref{eq:init} and Proposition~\ref{prop:velocity} supply the required bound. Although the cited lemma is stated for $\alpha\in[\alpha_0,6/5)$, its restriction $\alpha\geq\alpha_0$ comes from requiring $\delta_*\leq\delta_1$. Replacing that velocity estimate by Proposition~\ref{prop:velocity}, the same argument applies throughout $1<\alpha<3/2$.
This gives a uniform $L^2$ bound for $\pa_z\omega$ and an $L^2$-in-time bound for $\Lh^{\alpha/2}\pa_z\omega$ on every finite time interval within the lifespan of the solution.

With the bound for $\pa_z\omega$ in hand, the argument of \cite[Lemma~3.3]{tan2025global} relates the available velocity index $\delta$ to the horizontal vorticity index $\rho$ through
\[
0\leq\rho\leq g(\delta)
\triangleq\min\left\{
\frac{\alpha(2\alpha+2\delta-1)}{2(\alpha-2\delta+4)},
\frac{\delta+\alpha-1}{2}
\right\}.
\]
Proposition~\ref{prop:velocity} supplies a velocity bound at any index $0\leq\delta<3/2$ for which $\Lh^\delta u_0\in L^2$. Under \eqref{eq:init}, $\delta=1$ is available and already suffices: for every $1<\alpha<3/2$,
\[
g(1)=\min\left\{
\frac{\alpha(2\alpha+1)}{2(\alpha+2)},\frac\alpha2
\right\}=\frac\alpha2.
\]
Thus the enhanced velocity bound allows the argument of \cite[Lemma~3.3]{tan2025global} to reach $\rho=\alpha/2$ in a single step. The resulting vorticity dissipation controls
\[
\int_0^T\|\Lh^\alpha\omega(t)\|_{L^2}^2\,\dd t,
\]
which is stronger than the bound required by \eqref{eq:BKM}, since $\alpha>(3-\alpha)/2$ for every $\alpha>1$.
This explains why Proposition~\ref{prop:velocity}, combined with the arguments of \cite[Lemmas~3.2--3.3]{tan2025global}, closes the continuation argument throughout $1<\alpha<3/2$, without the iterative improvement of velocity and vorticity indices used there.

We now provide a self-contained proof of the bounds needed for \eqref{eq:BKM}. Thanks to the substantially improved estimate on $u$ in Proposition~\ref{prop:velocity}, the proof simplifies considerably compared with the argument in \cite{tan2025global}.

We first establish the time-integrability of $\|\pa_xu\|_{L^\infty}$. A time weight allows us to obtain this bound using only $\Lh^\alpha u_0\in L^2$ and $\omega_0\in L^\infty$, as supplied by \eqref{eq:init}.

\begin{proposition}\label{prop:ux-integrable}
Let $1<\alpha<3/2$. A smooth solution with $u_0\in\HH$, $\Lh^\alpha u_0\in L^2$, and $\omega_0\in L^\infty$ satisfies
\begin{equation}\label{eq:ux-integrable}
\int_0^T\|\pa_xu(t)\|_{L^\infty}\,\dd t\leq C_T,
\end{equation}
where $C_T$ depends only on $\alpha,\nu,T,\|\Lh^\alpha u_0\|_{L^2}$, and $\|\omega_0\|_{L^\infty}$.
\end{proposition}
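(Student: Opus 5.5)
The plan is to combine a time-weighted version of Proposition~\ref{prop:velocity} with a Littlewood--Paley splitting of $\pa_xu$. I must flag at the outset that the plan does not close as stated: the high-frequency summability in the third step fails for all $1<\alpha<3/2$ unless an extra ingredient is supplied, and I have not identified that ingredient.

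\emph{Step 1: propagate and smooth velocity norms.} First I would use \eqref{eq:velocity-energy} with $s=\alpha$. This is admissible since $\alpha<3/2$ and $\Lh^\alpha u_0\in L^2$, and it gives
\[
\sup_{[0,T]}\|\Lh^\alpha u\|_{L^2}^2+\nu\int_0^T\|\Lh^{\frac{3\alpha}2}u\|_{L^2}^2\,\dd t\le C_T .
\]
To reach indices $s\in(\alpha,3/2)$, which the initial data do not control, I would multiply \eqref{eq:velocity-energy} at index $s$ by a weight $t^{\gamma}$. Here $\gamma=\frac{2(s-\alpha)}{\alpha}$, so that $t^{\gamma}\|\Lh^su\|_{L^2}^2$ can be interpolated between $\|\Lh^\alpha u\|_{L^2}^2$ and the dissipation $\|\Lh^{s+\frac\alpha2}u\|_{L^2}^2$. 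The term $\gamma t^{\gamma-1}\|\Lh^su\|_{L^2}^2$ produced by the weight would then be absorbed by Young's inequality against the unweighted bound $\int_0^T\|\Lh^{3\alpha/2}u\|_{L^2}^2\,\dd t$. Gr\"onwall would give
\[
\int_0^T t^{\gamma}\|\Lh^{s+\frac\alpha2}u\|_{L^2}^2\,\dd t\le C_T .
\]
The constant would depend only on $\alpha,\nu,T,\|\Lh^\alpha u_0\|_{L^2}$ and $\|\omega_0\|_{L^\infty}$; the vorticity enters only through \eqref{eq:max} and \eqref{eq:uomegainf}.

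\emph{Step 2: dyadic bound for $\pa_xu$.} I would write $\pa_xu=\sum_j\pa_xu_j$. For low frequencies $j\le N$, Bernstein \eqref{eq:Bernstein}, \eqref{eq:LP-cutoff} and \eqref{eq:uomegainf} give
\[
\|\pa_xu_j\|_{L^\infty}\lesssim 2^j\|\omega_0\|_{L^\infty}.
\]
For high frequencies I would use Bernstein in $x$ from $L^2_x$ to $L^\infty_x$, together with the vertical Agmon inequality for the zero-vertical-mean block $u_j$:
\[
\|\pa_xu_j\|_{L^\infty}\lesssim 2^{\frac{3j}2}\|u_j\|_{L^2}^{\frac12}\|\Delta_j\omega\|_{L^2}^{\frac12}.
\]
The factor $\|\Delta_j\omega\|_{L^2}$ would be controlled by \eqref{eq:omegaL2}, which is legitimate since $\omega_0\in L^\infty\subset L^2$. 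The factor $\|u_j\|_{L^2}$ would be controlled by the weighted dissipation of Step 1 at an index $\sigma=s+\alpha/2$. After summing in $j$, I would apply H\"older in time. The time weight $t^{-\gamma/4}$ that appears in the high-frequency part is integrable provided $\gamma<4$, which holds.

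\emph{Step 3: summing the high frequencies (the main obstacle).} With the vorticity at horizontal index $\tau$, the high-frequency sum converges only if $(\sigma+\tau)/2>3/2$.
\begin{itemize}
\item Proposition~\ref{prop:velocity} limits $\sigma$ to $s+\alpha/2<3/2+\alpha/2$.
\item The $L^2$ vorticity energy \eqref{eq:omegaL2} gives only $\tau=\alpha/2$.
\end{itemize}
Together these give $\sigma+\tau<3/2+\alpha<3$ for every $\alpha<3/2$, so the condition fails and the plan breaks down exactly here. Choosing the frequency cutoff $N(t)$ adaptively and optimizing between the low and high parts cannot repair this. Because the high-frequency series diverges, the optimization only yields a bound with a negative power of $t$, not one integrable in time.

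The first thing I would try is a time-weighted vorticity estimate, $t^{\beta}\|\Lh^{\alpha/2}\omega\|_{L^2}^2$, which would raise $\tau$ to $\alpha$. Closing it requires controlling $\wh\,\pa_z\omega$, which is circular with the bound on $\pa_z\omega$ this proposition is meant to feed into. The alternative is to replace the vertical Agmon step by one that uses $\|\omega\|_{L^\infty}$ more efficiently. This is the step where I expect the real difficulty, and I do not see how to overcome it with the tools listed above.
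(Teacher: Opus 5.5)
Your diagnosis of where the plan breaks is correct, and the gap is genuine. With the vertical Agmon inequality the vorticity enters through $\|\Delta_j\omega\|_{L^2}$, and the resulting condition $\sigma+\tau>3$, with $\sigma<\frac32+\frac\alpha2$ and $\tau=\frac\alpha2$, can hold only for $\alpha>\frac32$, i.e.\ in the range the energy criterion already covers.

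The missing ingredient is the one you gesture at in your last sentence. At each fixed $x$, apply the one-dimensional Gagliardo--Nirenberg inequality $\|f\|_{L_z^\infty}\le C\|f\|_{L_z^2}^{2/3}\|\pa_zf\|_{L_z^\infty}^{1/3}$ to the zero-vertical-mean block $f=\Delta_ju$. This puts $\pa_z\Delta_ju=\Delta_j\omega$ in $L^\infty$ rather than $L^2$. The vorticity then costs no horizontal regularity at all, since $\|\Delta_j\omega\|_{L^\infty}\le C\|\omega\|_{L^\infty}\le C\|\omega_0\|_{L^\infty}$ by \eqref{eq:LP-cutoff} and \eqref{eq:max}. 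Moreover, the Bernstein loss $2^{j/2}$ from $L_x^2$ to $L_x^\infty$ is paid only to the power $\frac23$:
\[
\|\pa_x\Delta_ju\|_{L^\infty}\le C2^{\frac{4j}3}\|\Delta_ju\|_{L^2}^{\frac23}\|\omega\|_{L^\infty}^{\frac13}.
\]
H\"older in $j$ with exponents $\frac32$ and $3$ gives $\|\pa_xu\|_{L^\infty}\le C_m\|\omega\|_{L^\infty}^{1/3}\|\Lh^mu\|_{L^2}^{2/3}$ for every $m>2$. Your threshold $\sigma+\tau>3$ is thus replaced by $m>2$. Proposition~\ref{prop:velocity} provides dissipation at $m=s+\frac\alpha2$ for every $s<\frac32$, and $\frac32+\frac\alpha2>2$ precisely because $\alpha>1$, so the window $2<m<\frac32+\frac\alpha2$ is nonempty. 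No low/high splitting, no horizontal regularity of $\omega$, and no weighted vorticity estimate are needed. It remains only to bound $\int_0^T\|\Lh^mu\|_{L^2}^{2/3}\,\dd t$.

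For that last bound, your Step~1 is the right device when $1<\alpha\le\frac43$. For $\frac43<\alpha<\frac32$ you can instead take $2<m\le\frac{3\alpha}2$ and use the unweighted bound at index $\alpha$ directly. However, the absorption you describe does not close at your exact exponent. With $\gamma=\frac{2(s-\alpha)}\alpha$ and $\theta=\frac{2(s-\alpha)}{2s-\alpha}$, one has $\gamma-\frac1{1-\theta}=-1$. Young's inequality therefore turns $\gamma t^{\gamma-1}\|\Lh^su\|_{L^2}^2$ into $\epsilon t^\gamma\|\Lh^{s+\frac\alpha2}u\|_{L^2}^2+C_\epsilon t^{-1}\|\Lh^\alpha u\|_{L^2}^2$, and $t^{-1}$ is not integrable at $t=0$. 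Interpolating against $\int_0^T\|\Lh^{3\alpha/2}u\|_{L^2}^2\,\dd t$ instead loses the same logarithm.

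Choose $\gamma$ strictly between $\frac{2(s-\alpha)}\alpha$ and $2$. The lower bound makes the Young remainder integrable. The upper bound is what the final step $\int_0^T(t^\gamma\|\Lh^mu\|_{L^2}^2)^{1/3}t^{-\gamma/3}\,\dd t\le C_T$ requires, via H\"older with exponents $3$ and $\frac32$. The interval is nonempty because $s<\frac32<2\alpha$.
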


\begin{proof}
We first establish the anisotropic interpolation estimate
\begin{equation}\label{eq:ux-anisotropic}
\|\pa_xu\|_{L^\infty}
\leq C_m\|\omega\|_{L^\infty}^{\frac13}
\|\Lh^m u\|_{L^2}^{\frac23},
\qquad m>2.
\end{equation}
For a function $f$ with zero vertical mean, the one-dimensional Gagliardo--Nirenberg interpolation inequality gives
\[
\|f\|_{L_z^\infty}
\leq C\|f\|_{L_z^2}^{\frac23}\|\pa_zf\|_{L_z^\infty}^{\frac13}.
\]
Each $\Delta_j u$ has zero vertical mean. Set $f=\Delta_j u$ and apply \eqref{eq:Bernstein} and \eqref{eq:LP-cutoff} to obtain
\[
\|\pa_x\Delta_j u\|_{L^\infty}
\leq C2^j\|\Delta_j u\|_{L^\infty}\leq C2^j\|\Delta_j u\|_{L_x^\infty L_z^2}^{\frac23} \|\Delta_j\omega\|_{L^\infty}^{\frac13}
\leq C2^{\frac{4j}{3}}\|\Delta_j u\|_{L^2}^{\frac23} \|\omega\|_{L^\infty}^{\frac13},
\]
where 
\[
\|f\|_{L_x^\infty L_z^2}\triangleq\operatorname*{ess\,sup}_{x\in\T}\Bigl(\int_0^1|f(x,z)|^2\,\dd z\Bigr)^{\frac12}.
\]
Since $\mean{\pa_xu}=0$, summing the dyadic blocks and applying H\"older's inequality with exponents $3/2$ and $3$, followed by \eqref{eq:LP-hom}, we obtain
\[
\|\pa_xu\|_{L^\infty}
\leq C\|\omega\|_{L^\infty}^{\frac13} \sum_{j\geq0}2^{\frac{(4-2m)j}3} \bigl(2^{mj}\|\Delta_j u\|_{L^2}\bigr)^{\frac23}
\leq C\|\omega\|_{L^\infty}^{\frac13} \left(\sum_{j\geq0}2^{(2-m)j}\right)^{\frac23} \|\Lh^m u\|_{L^2}^{\frac23}.
\]
The geometric series converges for $m>2$, proving \eqref{eq:ux-anisotropic}.

Fix $2<m<\frac32+\frac\alpha2$ and set $s=m-\frac\alpha2$. Integrating \eqref{eq:ux-anisotropic} in time and using the maximum principle \eqref{eq:max}, we get
\[
 \int_0^T\|\pa_xu(t)\|_{L^\infty}\,\dd t 
 \leq C_m\|\omega_0\|_{L^\infty}^{\frac13}\int_0^T \|\Lh^m u(t)\|_{L^2}^{\frac23}\,\dd t.
\]
If $\Lh^s u_0\in L^2$, H\"older's inequality in time and Proposition~\ref{prop:velocity} give
\[
\int_0^T\|\Lh^m u(t)\|_{L^2}^{\frac23}\,\dd t
\leq T^{\frac23}
\left(\int_0^T\|\Lh^m u(t)\|_{L^2}^2\,\dd t\right)^{\frac13}
\leq C_T.
\]
This proves \eqref{eq:ux-integrable}, but with a constant depending on $\|\Lh^s u_0\|_{L^2}$. When $4/3<\alpha<3/2$, we can choose $2<m\leq3\alpha/2$, so that $s=m-\alpha/2\leq\alpha<3/2$. The initial norm is then controlled by $\|\Lh^\alpha u_0\|_{L^2}$, and the proof is complete in this range.

For $1<\alpha\leq4/3$, however, every $m>2$ gives $s>2-\alpha/2\geq\alpha$, so the preceding argument requires more initial regularity than assumed. To obtain \eqref{eq:ux-integrable} using only $\Lh^\alpha u_0\in L^2$, we introduce a time weight $t^\mu$, with $\mu>0$ to be chosen below. 
Multiplying \eqref{eq:velocity-energy} by $t^\mu$, we obtain
\[
\frac{\dd}{\dd t}\left(t^\mu\|\Lh^s u\|_{L^2}^2\right) +\nu t^\mu\|\Lh^{s+\frac\alpha2} u\|_{L^2}^2
\leq Ct^\mu\|\Lh^s u\|_{L^2}^2
+\mu t^{\mu-1}\|\Lh^s u\|_{L^2}^2.
\]
Here $C$ depends only on $\alpha,\nu,s$, and $\|\omega_0\|_{L^\infty}$. 

Since $\alpha\leq2-\frac\alpha2<s<\frac32<m$, interpolation between $\alpha$ and $m=s+\frac\alpha2$ gives
\[
\|\Lh^s u\|_{L^2} 
\leq\|\Lh^\alpha u\|_{L^2}^{1-\theta}\|\Lh^m u\|_{L^2}^\theta,\qquad \theta = \frac{2(s-\alpha)}{2s-\alpha}.
\]
Thus Young's inequality implies
\[
\mu t^{\mu-1}\|\Lh^s u\|_{L^2}^2
\leq\frac\nu2\, t^\mu\|\Lh^m u\|_{L^2}^2 + Ct^{\mu-\frac1{1-\theta}}\|\Lh^\alpha u\|_{L^2}^2.
\]

Since $s<3/2<2\alpha$, we have
\[
0<\frac{\theta}{1-\theta}=\frac{2(s-\alpha)}{\alpha}<2.
\]
We may therefore choose
\[
\frac{\theta}{1-\theta}<\mu<2.
\]
Fix $m$ and $\mu$ in the above ranges depending only on $\alpha$. Combining the weighted energy inequality with Young's inequality and using Proposition~\ref{prop:velocity} at index $\alpha$, we obtain
\[
\frac{\dd}{\dd t}\left(t^\mu\|\Lh^s u\|_{L^2}^2\right)
+\frac\nu2\,t^\mu\|\Lh^m u\|_{L^2}^2
\leq Ct^\mu\|\Lh^s u\|_{L^2}^2
+C_Tt^{\mu-\frac1{1-\theta}}.
\]
The last term is integrable at $t=0$ because $\mu-\frac1{1-\theta}>-1$. Moreover, $t^\mu\|\Lh^s u(t)\|_{L^2}^2$ vanishes at $t=0$ for each smooth solution. Gr\"onwall's inequality and integration therefore yield
\[
\sup_{0<t\leq T}t^\mu\|\Lh^s u(t)\|_{L^2}^2
+\nu\int_0^Tt^\mu\|\Lh^m u(t)\|_{L^2}^2\,\dd t
\leq C_T,
\]
where $C_T$ depends only on the quantities stated in the proposition, and not on $\|\Lh^s u_0\|_{L^2}$.

Finally, H\"older's inequality in time gives
\begin{align*}
\int_0^T\|\Lh^m u(t)\|_{L^2}^{\frac23}\,\dd t
&=\int_0^T
\left(t^\mu\|\Lh^m u(t)\|_{L^2}^2\right)^{\frac13}
t^{-\frac\mu3}\,\dd t\\
&\leq
\left(\int_0^Tt^\mu\|\Lh^m u(t)\|_{L^2}^2\,\dd t\right)^{\frac13}
\left(\int_0^Tt^{-\frac\mu2}\,\dd t\right)^{\frac23}
\leq C_T,
\end{align*}
since $\mu<2$. Together with \eqref{eq:ux-anisotropic} and the maximum principle \eqref{eq:max}, this proves \eqref{eq:ux-integrable}.
\end{proof}

The same argument proves Proposition~\ref{prop:ux-integrable} under the weaker initial velocity assumption $\Lh^\delta u_0\in L^2$ whenever $\delta\geq0$ and $\delta>2-3\alpha/2$. Thus $\delta=0$ is admissible when $\alpha>4/3$, while $\delta=1/2$ works throughout $1<\alpha<3/2$. We use the stronger assumption $\Lh^\alpha u_0\in L^2$ to match the classical solution framework.

Let $q=\pa_z\omega$. Differentiating \eqref{eq:vorticity} in $z$ and using $\pa_z\wh=-\pa_xu$ yields
\begin{equation}\label{eq:q-dynamics}
\pa_tq+(u\pa_x+\wh\pa_z)q+\nu\Lh^\alpha q
=(\pa_xu)q-\omega\pa_x\omega.
\end{equation}
We establish the $L^2$ energy estimate on $q$ in the following proposition. This gives the bound obtained in \cite[Lemma~3.2]{tan2025global} throughout $1<\alpha<3/2$, under the classical initial-data assumptions, with a simpler proof based on Proposition~\ref{prop:ux-integrable}.

\begin{proposition}\label{prop:vertical-vorticity}
Let $1<\alpha<3/2$. A smooth solution with $u_0\in\HH$, $\Lh^\alpha u_0\in L^2$, $\omega_0\in L^\infty$, and $q_0=\pa_z\omega_0\in L^2$ satisfies
\begin{equation}\label{eq:qbound-short}
\sup_{0\leq t\leq T}\|q(t)\|_{L^2}^2
+\nu\int_0^T\|\Lh^{\frac\alpha2}q(t)\|_{L^2}^2\,\dd t
\leq C_T.
\end{equation}
The constant depends on $\alpha,\nu,T,\|\Lh^\alpha u_0\|_{L^2}$, $\|\omega_0\|_{L^\infty}$, and $\|q_0\|_{L^2}$.
\end{proposition}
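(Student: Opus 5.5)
We multiply \eqref{eq:q-dynamics} by $q$ and integrate over $\Omega$. The transport term $(u\pa_x+\wh\pa_z)q$ drops out: $\pa_xu+\pa_z\wh=0$, and $\wh=0$ at $z=0,1$ by \eqref{eq:bc}. This gives
\[
\frac12\frac{\dd}{\dd t}\|q\|_{L^2}^2+\nu\|\Lh^{\frac\alpha2}q\|_{L^2}^2
=\int_\Omega(\pa_xu)q^2\,\dxdz-\int_\Omega\omega\,\pa_x\omega\,q\,\dxdz .
\]
We bound the stretching term crudely by $\|\pa_xu\|_{L^\infty}\|q\|_{L^2}^2$. Proposition~\ref{prop:ux-integrable} says the coefficient $\|\pa_xu(t)\|_{L^\infty}$ is integrable on $[0,T]$, with a bound depending only on the allowed quantities. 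So this term is harmless in Gr\"onwall's inequality. This is exactly where the improved velocity estimates enter.

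For the forcing term, we move half a horizontal derivative onto $q$ using self-adjointness of $\Lh^{1/2}$:
\[
\left|\int_\Omega\omega\,\pa_x\omega\,q\right|
\leq\|\Lh^{-\frac12}\pa_x(\omega^2/2)\|_{L^2}\|\Lh^{\frac12}q\|_{L^2}
\lesssim\|\Lh^{\frac12}(\omega^2)\|_{L^2}\|\Lh^{\frac12}q\|_{L^2}.
\]
Here $\omega\pa_x\omega=\tfrac12\pa_x(\omega^2)$ has zero horizontal mean, so $\Lh^{-1/2}$ is legitimate. At fixed $z$, Lemma~\ref{lem:leibniz} gives $\|\Lh^{1/2}(\omega^2)\|_{L^2_x}\lesssim\|\omega\|_{L^\infty}\|\Lh^{1/2}\omega\|_{L^2_x}$. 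Since $\alpha>1$, interpolation yields
\[
\|\Lh^{\frac12}q\|_{L^2}\leq\|q\|_{L^2}^{1-\frac1\alpha}\|\Lh^{\frac\alpha2}q\|_{L^2}^{\frac1\alpha}
\qquad\text{and}\qquad
\|\Lh^{\frac12}\omega\|_{L^2}\lesssim\|\omega\|_{L^2}+\|\Lh^{\frac\alpha2}\omega\|_{L^2}.
\]
By Young's inequality, the forcing term is then at most
\[
\frac\nu2\|\Lh^{\frac\alpha2}q\|_{L^2}^2+\|q\|_{L^2}^2
+C\|\omega_0\|_{L^\infty}^2\bigl(\|\omega\|_{L^2}^2+\|\Lh^{\frac\alpha2}\omega\|_{L^2}^2\bigr).
\]
The last group is integrable in time by \eqref{eq:omegaL2} and \eqref{eq:max}. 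Its time integral is controlled by $\|\omega_0\|_{L^2}\le\|\omega_0\|_{L^\infty}$, since $|\Omega|=1$.

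Collecting the terms, we get
\[
\frac{\dd}{\dd t}\|q\|_{L^2}^2+\nu\|\Lh^{\frac\alpha2}q\|_{L^2}^2
\leq C\bigl(1+\|\pa_xu\|_{L^\infty}\bigr)\|q\|_{L^2}^2+G(t),
\qquad \int_0^TG\,\dd t\leq C.
\]
Gr\"onwall's inequality and Proposition~\ref{prop:ux-integrable} then give \eqref{eq:qbound-short}. The constant depends only on $\alpha,\nu,T,\|\Lh^\alpha u_0\|_{L^2},\|\omega_0\|_{L^\infty},\|q_0\|_{L^2}$, as claimed.

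The main obstacle is the stretching term $(\pa_xu)q^2$. Controlling it is what previously forced the iterative scheme and the restriction $\alpha\geq\alpha_0$. Here it reduces entirely to the time integrability of $\|\pa_xu\|_{L^\infty}$, which is already available. A secondary point is the rigor of the integrations by parts: the boundary terms at $z=0,1$ vanish because $\wh|_{z=0,1}=0$, and for smooth solutions there are no other boundary contributions.
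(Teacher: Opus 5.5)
Your proof is correct and follows the paper's argument. It uses the same energy identity and the same bound $\|\pa_xu\|_{L^\infty}\|q\|_{L^2}^2$ for the stretching term, closed by Proposition~\ref{prop:ux-integrable}, and the same half-derivative splitting with the fractional Leibniz rule for $\omega\pa_x\omega$. The only cosmetic difference is that you interpolate $\|\Lh^{1/2}q\|_{L^2}$ between $\|q\|_{L^2}$ and $\|\Lh^{\alpha/2}q\|_{L^2}$, which adds a harmless $\|q\|_{L^2}^2$ term to Gr\"onwall. The paper instead uses $\|\Lh^{1/2}f\|_{L^2}\leq C\|\Lh^{\alpha/2}f\|_{L^2}$ on the torus, for both $q$ and $\omega$.
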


\begin{proof}
The transport term in \eqref{eq:q-dynamics} vanishes in the $L^2$ energy balance, by incompressibility and the boundary conditions. Thus
\[
\frac12\frac{\dd}{\dd t}\|q\|_{L^2}^2 +\nu\|\Lh^{\frac\alpha2}q\|_{L^2}^2
=-\int_\Omega\omega\pa_x\omega\,q\,\dxdz +\int_\Omega(\pa_xu)q^2\,\dxdz
\triangleq J_1+J_2.
\]
For $J_1$, write $\omega\pa_x\omega=\frac12\pa_x(\omega^2)$ and distribute one-half horizontal derivative to each factor. The fractional Leibniz rule \eqref{eq:leibniz} and the maximum principle \eqref{eq:max} give
\begin{align*}
|J_1|
&\leq C\|\Lh^{1/2}(\omega^2)\|_{L^2} \|\Lh^{1/2}q\|_{L^2}
\leq C\|\omega_0\|_{L^\infty} \|\Lh^{1/2}\omega\|_{L^2}\|\Lh^{1/2}q\|_{L^2}\\
&\leq\frac\nu2\|\Lh^{\frac\alpha2}q\|_{L^2}^2
+C_{\alpha,\nu}\|\omega_0\|_{L^\infty}^2 \|\Lh^{\frac\alpha2}\omega\|_{L^2}^2.
\end{align*}
For $J_2$, we simply use
\[
|J_2|\leq\|\pa_xu\|_{L^\infty}\|q\|_{L^2}^2.
\]
Consequently,
\begin{equation}\label{eq:qineq-short}
\frac{\dd}{\dd t}\|q\|_{L^2}^2+\nu\|\Lh^{\frac\alpha2}q\|_{L^2}^2
\leq 2\|\pa_xu\|_{L^\infty}\|q\|_{L^2}^2
 +C_{\alpha,\nu}\|\omega_0\|_{L^\infty}^2 \|\Lh^{\frac\alpha2}\omega\|_{L^2}^2.
\end{equation}
Gr\"onwall's inequality applied to \eqref{eq:qineq-short} gives, for $0\leq t\leq T$,
\begin{align*}
&\|q(t)\|_{L^2}^2+\nu\int_0^t\|\Lh^{\frac\alpha2}q(\tau)\|_{L^2}^2\,\dd\tau\\
&\quad\leq\left(\|q_0\|_{L^2}^2
+C_{\alpha,\nu}\|\omega_0\|_{L^\infty}^2
\int_0^t\|\Lh^{\frac\alpha2}\omega(\tau)\|_{L^2}^2\,\dd\tau\right)
\exp\left(2\int_0^t\|\pa_xu(\tau)\|_{L^\infty}\,\dd\tau\right)
\leq C_T,
\end{align*}
where we have used \eqref{eq:omegaL2} and Proposition~\ref{prop:ux-integrable}. This proves \eqref{eq:qbound-short}.
\end{proof}

We can now verify the continuation criterion \eqref{eq:BKM} by vertical interpolation.

\begin{proposition}\label{prop:continuation-interpolation}
Under the same assumptions as in Proposition~\ref{prop:vertical-vorticity}, a smooth solution satisfies
\begin{equation}\label{eq:continuation-bound}
\int_0^T\|\Lh^{\frac{3-\alpha}{2}}\omega(t)\|_{L^2}^2\,\dd t
\leq C_T.
\end{equation}
The constant depends only on the quantities listed in Proposition~\ref{prop:vertical-vorticity}.
\end{proposition}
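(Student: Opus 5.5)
The plan is to verify \eqref{eq:continuation-bound} by interpolating in the vertical variable, mode by mode in $x$, between the velocity and $q=\pa_z\omega$. The two inputs are Proposition~\ref{prop:velocity} at index $s=\alpha$, which is admissible since $\alpha<3/2$ and $\Lh^\alpha u_0\in L^2$, and Proposition~\ref{prop:vertical-vorticity}. The first gives $\int_0^T\|\Lh^{\frac{3\alpha}2}u\|_{L^2}^2\,\dd t\leq C_T$. The second gives $\int_0^T\|\Lh^{\frac\alpha2}q\|_{L^2}^2\,\dd t\leq C_T$. The derivative count $\frac{3-\alpha}{2}$ will be split between these two quantities.

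\emph{Step 1 (one-dimensional interpolation).} Fix a horizontal frequency $n\neq0$ and set $f=\widehat u_n(z)$, so that $f'=\widehat\omega_n$ and $f''=\widehat q_n$. I will use the Gagliardo--Nirenberg inequality on $[0,1]$ in the form
\[
\|f'\|_{L_z^2}^2\leq C\bigl(\|f\|_{L_z^2}\|f''\|_{L_z^2}+\|f\|_{L_z^2}^2\bigr).
\]
The lower-order term cannot be dropped: a linear profile with zero mean has $f''=0$ but $f'\neq0$. Because of this term, a direct integration by parts $\|f'\|^2=-\int f\bar f''$ is unavailable, since the boundary terms $f\bar f'|_{0}^{1}$ do not vanish. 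The standard interval inequality avoids this issue, so no boundary information is needed.

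\emph{Step 2 (summing over frequencies).} Multiply the inequality from Step~1 by $(2\pi|n|)^{3-\alpha}$ and sum over $n\neq0$. For the main term, split $3-\alpha=a+b$ with $b=\frac\alpha2$ and $a=3-\frac{3\alpha}2$. Cauchy--Schwarz in $n$ then gives
\[
\|\Lh^{\frac{3-\alpha}2}\omega\|_{L^2}^2\leq C\|\Lh^{a}u\|_{L^2}\|\Lh^{\frac\alpha2}q\|_{L^2}+C\|\Lh^{\frac{3-\alpha}2}u\|_{L^2}^2 .
\]
Since $u$ has no zero mode contribution here and $\alpha>1$, both $a\leq\frac{3\alpha}2$ and $\frac{3-\alpha}2\leq\frac{3\alpha}2$ hold. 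Hence $\|\Lh^{a}u\|_{L^2}$ and $\|\Lh^{\frac{3-\alpha}2}u\|_{L^2}$ are both bounded by $C\|\Lh^{\frac{3\alpha}2}u\|_{L^2}$, using $|n|\geq1$.

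\emph{Step 3 (time integration).} Integrate over $[0,T]$ and apply Cauchy--Schwarz in time to the product term. This bounds the left side of \eqref{eq:continuation-bound} by
\[
C\Bigl(\int_0^T\|\Lh^{\frac{3\alpha}2}u\|_{L^2}^2\,\dd t\Bigr)^{\frac12}\Bigl(\int_0^T\|\Lh^{\frac\alpha2}q\|_{L^2}^2\,\dd t\Bigr)^{\frac12}+C\int_0^T\|\Lh^{\frac{3\alpha}2}u\|_{L^2}^2\,\dd t .
\]
Both factors are finite by the two propositions cited above, with constants depending only on the quantities listed in Proposition~\ref{prop:vertical-vorticity}. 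The only delicate point is Step~1: the interval interpolation must keep the lower-order term, because vertical boundary values of $u$ and $\omega$ are not controlled. Since that term is absorbed by the strong velocity bound, I expect no genuine obstacle.
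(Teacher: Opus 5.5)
Your proposal is correct and follows the paper's proof essentially step by step. Both arguments use the same interval interpolation inequality $\|\pa_z f\|_{L_z^2}^2\le C\|f\|_{L_z^2}\|\pa_z^2 f\|_{L_z^2}+C\|f\|_{L_z^2}^2$ with the lower-order term kept, the same split $3-\alpha=(3-\frac{3\alpha}{2})+\frac\alpha2$, and the same two inputs, Proposition~\ref{prop:velocity} at index $\alpha$ and Proposition~\ref{prop:vertical-vorticity}. The only differences are cosmetic: you work with exact horizontal Fourier modes via Parseval instead of Littlewood--Paley blocks, and you apply Cauchy--Schwarz in time instead of Young's inequality pointwise in time.
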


\begin{proof}
The one-dimensional interpolation inequality
\[
\|\pa_z f\|_{L_z^2}^2
\leq C\|f\|_{L_z^2}\|\pa_z^2f\|_{L_z^2}
+C\|f\|_{L_z^2}^2
\]
requires no boundary conditions on $f$. Set $f=\Delta_j u$ at each fixed $x$. Integration in $x$ and Cauchy--Schwarz give
\begin{align*}
\|\Delta_j\omega\|_{L^2}^2
&\leq C\int_{\T}
\|\Delta_j u(x,\cdot)\|_{L_z^2}
\|\Delta_j q(x,\cdot)\|_{L_z^2}\,\dd x
+C\|\Delta_j u\|_{L^2}^2\\
&\leq C\|\Delta_j u\|_{L^2}\|\Delta_j q\|_{L^2}
+C\|\Delta_j u\|_{L^2}^2.
\end{align*}
Using \eqref{eq:LP-hom}, the identity $3-\alpha=(3-3\alpha/2)+\alpha/2$, and Cauchy--Schwarz in the dyadic sum, we obtain
\[
\begin{aligned}
\|\Lh^{\frac{3-\alpha}{2}}\omega\|_{L^2}^2
&\leq C\sum_{j\geq0}2^{(3-\alpha)j}\|\Delta_j\omega\|_{L^2}^2\\
&\leq C\sum_{j\geq0}
\bigl(2^{(3-\frac{3\alpha}{2})j}\|\Delta_j u\|_{L^2}\bigr)
\bigl(2^{\frac{\alpha j}{2}}\|\Delta_j q\|_{L^2}\bigr)
+C\sum_{j\geq0}2^{(3-\alpha)j}\|\Delta_j u\|_{L^2}^2\\
&\leq C
\left(\sum_{j\geq0}2^{(6-3\alpha)j}\|\Delta_j u\|_{L^2}^2\right)^{\frac12}
\left(\sum_{j\geq0}2^{\alpha j}\|\Delta_j q\|_{L^2}^2\right)^{\frac12}
+C\|\Lh^{\frac{3-\alpha}{2}} u\|_{L^2}^2\\
&\leq C\|\Lh^{3-\frac{3\alpha}{2}}u\|_{L^2}
\|\Lh^{\frac\alpha2}q\|_{L^2}
+C\|\Lh^{\frac{3-\alpha}{2}} u\|_{L^2}^2.
\end{aligned}
\]
As $\frac{3-\alpha}2\leq3-\frac{3\alpha}2$, Young's inequality yields
\[
\|\Lh^{\frac{3-\alpha}{2}}\omega\|_{L^2}^2
\leq C\left(
\|\Lh^{3-\frac{3\alpha}{2}}u\|_{L^2}^2
+\|\Lh^{\frac\alpha2}q\|_{L^2}^2
\right).
\]
Since $0<3-3\alpha/2<3\alpha/2$ for $1<\alpha<3/2$, Proposition~\ref{prop:velocity} at index $\alpha$ gives
\[
\int_0^T\|\Lh^{3-\frac{3\alpha}{2}}u(t)\|_{L^2}^2\,\dd t
\leq C\int_0^T\|\Lh^{\frac{3\alpha}{2}}u(t)\|_{L^2}^2\,\dd t
\leq C_T.
\]
The $q$ term is integrable by \eqref{eq:qbound-short}. Integrating the preceding pointwise estimate over $[0,T]$ therefore proves \eqref{eq:continuation-bound}.
\end{proof}

Proposition~\ref{prop:continuation-interpolation} replaces the direct evolution estimate for the horizontal Sobolev norms of $\omega$ in \cite[Lemma~3.3]{tan2025global} with a vertical interpolation argument. This substantially simplifies the proof of the continuation bound. All the initial norms required by Propositions~\ref{prop:vertical-vorticity} and~\ref{prop:continuation-interpolation} are included in the classical initial-data assumptions \eqref{eq:init}. For rougher initial data in the strong solution framework, however, this interpolation argument can require more horizontal regularity of $u_0$ than an argument that directly propagates the horizontal regularity of $\omega$. The pursuit of global well-posedness under the lowest possible initial regularity is left for future work.

Finally, Proposition~\ref{prop:continuation-interpolation}, together with Theorem~\ref{thm:LWP-BKM}, yields the global well-posedness asserted in Theorem~\ref{thm:GWP}. Indeed, start with the unique local classical solution supplied by Theorem~\ref{thm:LWP-BKM}. The estimates above, justified for classical solutions by smooth approximation, bound \eqref{eq:BKM} uniformly up to any finite endpoint of its lifespan. The continuation criterion therefore rules out a finite maximal existence time. Uniqueness and continuous dependence follow from the same classical solution theory.

\medskip
\subsection*{Declaration on the use of generative AI}
During the preparation of this manuscript, the authors used ChatGPT Sol 5.6 and Astra 6 (OpenAI) and Claude Opus 5.5 (Anthropic) as auxiliary tools for language refinement, LaTeX formatting, and proof auditing. The manuscript was written by the authors, who independently analyzed and verified all mathematical content and arguments. The authors take full responsibility for the accuracy and originality of the work.

\end{document}